\renewcommand\@biblabel[1]{}
\numberwithin{equation}{section}
\newcommand{\beq}{\begin{equation}}
\newcommand{\eeq}{\end{equation}}
\newcommand{\beqs}{\begin{eqnarray*}}
\newcommand{\eeqs}{\end{eqnarray*}}
\newcommand{\beqn}{\begin{eqnarray}}
\newcommand{\eeqn}{\end{eqnarray}}
\newcommand{\beqa}{\begin{array}}
\newcommand{\eeqa}{\end{array}}
\def\lra{\longrightarrow}
\def\bc{\begin{center}}
\def\ec{\end{center}}
\def\begeq{\begin{equation}}
\def\endeq{\end{equation}}
\def\and{\quad{\rm and}\quad}
\let\lra=\longrightarrow
\def\mapright\#1{\,\smash{\mathop{\lra}\limits^{\#1}}\,}
\newtheorem{prop}{Proposition}[section]
\newtheorem{theo}[prop]{Theorem}
\newtheorem{lem}[prop]{Lemma}
\newtheorem{claim}[prop]{Claim}
\newtheorem{rem}[prop]{Remark}
\newtheorem{defi}[prop]{Definition}
\title{Fano manifolds with weak almost K\"ahler-Ricci solitons}
\author   {Feng Wang }
\author { Xiaohua $\text{Zhu}^*$}
\thanks {* Partially supported by the NSFC Grants 10990013 and 11271022}
 \subjclass {Primary: 53C25; Secondary:  53C55,
 58J05}
\keywords { Bakry-\'{E}mery  Ricci curvature,  Gromov-Hausdorff topology, K\"ahler-Ricci flow,
almost weak solitons}
\address{ Feng Wang\\School of Mathematical Sciences, Peking University,
Beijing, 100871, China}
\address{ Xiaohua Zhu\\School of Mathematical Sciences and BICMR, Peking University,
Beijing, 100871, China\\
 xhzhu@math.pku.edu.cn}
\begin{document}

\bibliographystyle{plain}

\begin{abstract} In this paper,    we prove that a sequence of
  weak  almost K\"ahler-Ricci solitons  under further suitable conditions   converge to a  K\"ahler-Ricci soliton   with complex codimension of singularities at least 2   in  the Gromov-Hausdorff topology.  As a corollary,   we  show that  on  a Fano manifold with the modified K-energy  bounded below,  there exists  a sequence of  weak  almost K\"ahler-Ricci solitons   which converge to a  K\"ahler-Ricci soliton  with complex codimension of singularities at least 2   in  the Gromov-Hausdorff topology.

\end{abstract}

\date{}

\maketitle

\tableofcontents

\setcounter{section}{-1}
\section{Introduction}

 In  [WZ],    we studied  the  structure of  the  limit space for a sequence  of Riemannian manifolds with  the Bakry-\'{E}mery Ricci curvature bounded
  below  in  the Gromov-Hausdorff topology.   In particular, for a  sequence of
  weak  almost K\"ahler-Ricci solitons  $\{(M_i, g^i, J_i)\}$,  we showed  that there exists a subsequence of   $\{(M_i, g^i,J_i)\}$  which converge to a metric  space $(Y,g_\infty)$   with complex codimension of singularities at least 2    in  the Gromov-Hausdorff topology.   As in [CC] for Riemannian manifolds with  the Ricci curvature bounded below,  each tangent space on  $(Y,g_\infty)$  is a metric cone.   The present  paper is  a continuance of [WZ].   We   further prove the smoothness of  the metric $g_\infty$  on the regular part  $\mathcal{R}$ of $Y$ under further suitable conditions.  Actually,  $g_\infty$ is a  K\"ahler-Ricci soliton on $\mathcal{R}$.

   Inspired by a  recent work of Tian and Wang  on  weak  almost Einstein metrics [TW],  we  use the K\"ahler-Ricci flow
 to smooth the sequence of K\"ahler metrics  $\{(M_i,g^{i}, J_i)\}$ to get  the $C^\infty$-convergence.     To realize this,   we  shall  first establish a  version of Perelman's pseudolocality theorem  for the  Hamilton's Ricci flow with  the Bakerly-\'Emergy Ricci curvature condition,  then we  control the deformation of  distance functions along the K\"ahler-Ricc flow as in [TW].

 It is useful to  mention that   there are two new  ingredients   in  our case compared to [TW]:  One  is that  we  modify the K\"ahler-Ricci flow to derive  an estimate for  the modified Ricci curvature (cf. Section 2); another is that we  estimate the growth of  the $C^0$-norm  of holomorphic vector fields associated to  the K\"ahler-Ricci solitons  along the flow (cf. Section 4).   The late is usually dependent  of  the initial metric $g^i$ of the  K\"ahler-Ricci flow. But for a  family of K\"{a}hler metrics  $g^s$ ($0<s<1$) constructed from   solutions  of a family of
complex Monge-Amp\`ere equations on a Fano manifold with the modified K-energy  bounded below [WZ],  we get a  uniform $C^0$-norm for the holomorphic vector field (cf. Lemma \ref{further-condition-2'}) under the deformed metrics.

The following can be regarded as  the main result in this paper.

 \begin{theo}\label{main-theorem-0}  Let $(M,J)$  be  a Fano manifold with the modified K-energy  bounded below.  Then  there exists  a sequence of
  weak  almost K\"ahler-Ricci solitons   on $(M,J)$  which converge to a  K\"ahler-Ricci soliton  with complex codimension of singularities at least 2   in the Gromov-Hausdorff topology.  In the other words,  a Fano manifold with   the modified K-energy  bounded below can be deformed to a  K\"ahler-Ricci soliton  with complex codimension of singularities at least 2.
\end{theo}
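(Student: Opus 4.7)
The plan is to combine the construction from [WZ] with the general convergence theorem that is the technical core of this paper. More precisely, under the assumption that the modified K-energy is bounded below, one first builds an explicit approximating sequence via a continuity method, and then verifies the hypotheses under which such a sequence is known to converge smoothly off a codimension-four singular set.

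The first step is to produce the sequence. On a Fano manifold $(M,J)$ with a fixed maximal torus $T$ in the reductive part of $\Aut(M,J)$ and a holomorphic vector field $X$ generating a K\"ahler-Ricci soliton type equation, [WZ] introduces a family of complex Monge-Amp\`ere equations parametrized by $s\in(0,1)$ whose solutions $\varphi^s$ give rise to K\"ahler metrics $g^s$. Each $g^s$ satisfies the weak almost K\"ahler-Ricci soliton equation, with the defect measured by $s$ going to $1$. The boundedness below of the modified K-energy along this path furnishes the $L^1$ or energy-type bounds needed to ensure this family is a genuine sequence of weak almost K\"ahler-Ricci solitons in the sense required by [WZ]. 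Choose $s_i\to 1$ and set $g^i=g^{s_i}$.

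The second step is to apply the general convergence theorem proved in the body of this paper to $\{(M,g^i,J)\}$. That theorem takes a sequence of weak almost K\"ahler-Ricci solitons, invokes [WZ] to extract a Gromov-Hausdorff limit $(Y,g_\infty)$ whose singular set has complex codimension $\geq 2$, and then upgrades the convergence to $C^\infty$ on the regular part $\mathcal R$ by smoothing along the K\"ahler-Ricci flow: the Bakry-\'Emery version of Perelman's pseudolocality theorem (Section 2) gives uniform local curvature bounds along the flow, and the distance-function estimates (in the style of [TW]) ensure that the flow does not destroy the Gromov-Hausdorff limit. The hypothesis that needs to be checked for this scheme to run on the specific sequence $\{g^i\}$ is the uniform $C^0$-bound on the generating holomorphic vector field $X$ with respect to $g^i$, and this is exactly the content of Lemma \ref{further-condition-2'}, whose proof uses the modified K-energy lower bound along the Monge-Amp\`ere continuity path. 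With this bound in hand, the modified Ricci curvature estimate from Section 2 and the holomorphic-vector-field growth estimate from Section 4 apply uniformly in $i$.

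Putting everything together, the limit metric $g_\infty$ is smooth on $\mathcal R$ and, by passing the soliton equation to the limit in $C^\infty_{\loc}(\mathcal R)$, satisfies the K\"ahler-Ricci soliton equation on $\mathcal R$; combined with the codimension $\geq 2$ singular set statement inherited from [WZ], this gives the stated conclusion. The chief obstacle in this program is the one flagged in the introduction: the $C^0$-growth of the holomorphic vector field under the K\"ahler-Ricci flow is not a priori controlled by the initial metric alone, and the proof has to exploit the particular Monge-Amp\`ere structure of $g^s$ (and the K-energy bound) to obtain an estimate that is independent of $i$; without this uniformity the pseudolocality argument would not survive the passage to the limit.
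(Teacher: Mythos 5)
Your proposal tracks the paper's own argument: take the Monge--Amp\`ere family $g^s$ from [WZ], check the additional hypotheses (\ref{further-condition-1})--(\ref{further-condition-3}) of Theorem \ref{main-theorem-1} using the modified K-energy lower bound (through the estimate $h_{g^s}-\theta_{g^s}\to 0$ and $|h_{g^s}|\le C$), and conclude by Theorem \ref{main-theorem-1}. One small attribution slip worth fixing: the bound on the vector field along the flow, $|X|_{g^s_t}\leq B/\sqrt{t}$, is Lemma \ref{flow-estimate}, whereas Lemma \ref{further-condition-2'} is the $L^1$-decay of $|R(g^s_t)-\Delta\theta_{g^s_t}-n|$; both are needed and both rest on the K-energy lower bound.
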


 The organization of paper is as follows.  In Section 1,  we prove  a pseudolocality theorem  of Perelman   for  the Hamilton's Ricci flow  under the Bakry-\'{E}mery  Ricci curvature condition.
  In Section 2,    we focus on  the (modified)  K\"ahler-Ricc flow to give a  local estimate for the Ricci curvature along the flow.
   Section 3 is devoted to estimate   the distance functions along the K\"ahler-Ricc flow.  In  Section 4,   we prove the main theorems in this paper,  Theorem \ref{main-theorem-1} and  Theorem \ref{main-theorem-2} (Theorem \ref{main-theorem-0}).

\section{A version of pseudolocality theorem}

In  this section, we prove a  version of Perelman's pseudolocality theorem  with  Bakerly-\'Emergy Ricci curvature condition (cf. Theorem 11.2 in [Pe]).
A similar version was recently appeared in [TW]. Since our case is lack of the lower bound of scalar curvature, in particular the lower bound of Ricci curvature,
we  will modify the arguments both in [Pe] and [TW].

First we recall a result about an estimate of isoperimetric constant on a geodesic  ball. This result comes  essentially  from a lemma  in [Li] and the volume comparison theorem with  the Bakerly-\'Emergy Ricci curvature bounded  below  in [WW].

\begin{lem}\label{isoperimetrc} Let $(M,g)$  be a Riemannian manifold with
\begin{align}\label{BE-condition}
\text{Ric}({g})+\text {hess }_{g} f\geq -(n-1) c g, ~|\nabla_{g}f|\le A.\end{align}
Then for any geodesic balls in $M$,  $(B_p(s))$,  $(B_p(r))$ with $r\ge s$,   there exists a uniform $C=C(n)$ such that
\begin{align}
ID_{\frac{n}{n-1}}(B_p(s))\geq C^{\frac{1}{n}}(\frac{\text {vol}(B_p(r))-\text {vol}(B_p(s))}{v(r+s)})^{\frac{n+1}{n}},
\end{align}
where $v(r)=e^{2Ar}\text{vol}_{c}(r)$ and $\text{vol}_{c}(r)$ denotes the volume of $r$-geodesic ball in the space form with constant curvature $-c$.
\end{lem}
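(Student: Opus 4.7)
The plan is to combine a Peter Li style isoperimetric argument on geodesic balls with the Wei--Wylie volume comparison for the Bakry--\'Emery Ricci curvature, which is essentially what the sentence preceding the lemma advertises. Fix a smooth subdomain $\Omega\subset B_p(s)$ with $\mathrm{vol}(\Omega)\leq \tfrac{1}{2}\mathrm{vol}(B_p(s))$; the complementary case is symmetric. Li's argument compares $\Omega$ with the annulus $B_p(r)\setminus\Omega$ via a foliation by minimising geodesics: for each pair $(x,y)$ with $x\in \Omega$ and $y\in B_p(r)\setminus\Omega$ there is a minimising segment from $x$ to $y$ of length at most $r+s$, and this segment must cross $\partial\Omega$. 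Parameterising these crossings by points of $\partial\Omega$ and applying the co-area formula together with a pointwise bound on the Jacobian of the exponential map along segments of length $\leq r+s$ produces an inequality whose Croke-style rearrangement yields the $(n+1)/n$ scaling appearing in the statement.

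The second ingredient is the control of the Jacobian spread along geodesics, which one obtains from Wei--Wylie's comparison theorem. Under the hypothesis $\mathrm{Ric}_g+\mathrm{Hess}_g f\geq -(n-1)cg$ with $|\nabla_g f|\leq A$, they show that the (unweighted) volume element at geodesic distance $\rho$ from $p$ is bounded by $e^{2A\rho}$ times the model volume density for the space form of constant sectional curvature $-c$. Integrating this pointwise bound gives $\mathrm{vol}(B_p(r+s))\leq e^{2A(r+s)}\mathrm{vol}_c(r+s)=v(r+s)$, which is precisely the quantity entering the denominator in the statement, since it controls the maximum volume into which Li's mass transport can spread.

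Combining the two ingredients and rearranging the resulting inequality yields the advertised lower bound for $ID_{n/(n-1)}(B_p(s))$ with a purely dimensional constant $C(n)^{1/n}$. The only delicate point is that the drift term $f$ must enter the comparison exclusively through the exponential factor $e^{2A(r+s)}$ and not through any pointwise values of $f$ itself; the gradient bound $|\nabla_g f|\leq A$ is exactly what is required for this, and is also what allows us to pass freely between weighted and unweighted measures in Wei--Wylie's comparison. I do not anticipate any genuine obstacle, as both building blocks are standard; the content of the lemma lies in its clean packaging, tailored to the Perelman-type pseudolocality argument that follows in this section.
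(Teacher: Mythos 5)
The paper supplies no proof of this lemma; it simply asserts that the result ``comes essentially from a lemma in [Li] and the volume comparison theorem ... in [WW],'' which is precisely the combination you describe. Your sketch correctly identifies the two ingredients and their roles---Li's Croke-style geodesic double-integral argument producing the $\frac{n+1}{n}$ exponent, and the Wei--Wylie pointwise volume-density bound $e^{2A\rho}\cdot(\text{model density})$ (valid because $|\nabla f|\leq A$ controls the drift along each radial geodesic) supplying the factor $v(r+s)$ in the denominator---so the proposal is consistent with the paper's intended route.
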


Lemma \ref{isoperimetrc} will be used to get a uniform Sobolev constant in  the proof of  following pseudolocality theorem  in  the Bakerly-\'Emergy geometry.

\begin{theo}\label{pseudolocality-theorem}
For any $\alpha,r\in [0,1]$, there exist $\tau=\tau(n,\alpha), \eta=\eta(n,\alpha),\epsilon=\epsilon(n,\alpha), \delta=\delta(n,\alpha),$
such that if $(M^n,g(\cdot, t))$ $(0\leq t\leq (\epsilon r)^2)$ is a solution of  Ricci flow,
\begin{align}\label{hamilton-ricci-flow}
\frac{\partial g}{\partial t}=-2\text{Ric}(g),
\end{align}
whose initial metric $g(\cdot,0)=g_0$  satisfies
\begin{align}\label{pseudo-condition}
\text{Ric}(g_0)+\text {hess }_{g_0} f\geq -(n-1)r^{-2}\tau^2 g_0, ~|\nabla f|_{g_0}\leq r^{-1}\eta, \end{align}
 and
  \begin{align}\label{maximal-volume-0} \text{vol}(B_q(r,g_0))\geq (1-\delta)c_n r^n,
\end{align}
where $c_n$ is the volume of unit ball in the Euclidean space $\mathbb R^n$, then for any $x\in B_q(\epsilon r,g_0)$ and $t\in(0,(\epsilon r)^2]$, we have
\begin{align}\label{peudo-cuvature}
|Rm(x,t)|< \alpha t^{-1}+(\epsilon r)^{-2},
\end{align}
Moreover,
\begin{align}\label{volume-estimate}\text{vol }B_x(\sqrt{t})\geq \kappa(n)t^{\frac{n}{2}},
\end{align}
where $\kappa(n)$ is a uniform constant.

\end{theo}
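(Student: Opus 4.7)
The plan is to follow Perelman's contradiction scheme for the pseudolocality theorem, adapted to the Bakry--\'Emery setting along the lines of [TW], with additional care to handle the absence of a scalar curvature lower bound.

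First I would assume the conclusion fails. Then there exist sequences $(M_i,g_i(t))$, $q_i\in M_i$, $r_i\in[0,1]$ and parameters $\tau_i,\eta_i,\epsilon_i,\delta_i\to 0$ satisfying hypotheses (1.3)--(1.4) but for which (1.5) breaks down at some point of $B_{q_i}(\epsilon_ir_i,g_i(0))\times(0,(\epsilon_ir_i)^2]$. A standard point-picking argument of Perelman then produces points $(x_i,t_i)$ with large curvature $Q_i=|Rm|(x_i,t_i)$ such that $|Rm|\le 4Q_i$ on a parabolic neighborhood of $(x_i,t_i)$ and such that $d_{g_i(0)}(x_i,q_i)$ is controlled in terms of $\epsilon_ir_i$ and $Q_i^{-1/2}$. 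Rescaling by $Q_i$ and extracting a subsequence, one expects a pointed limiting Ricci flow with $|Rm|(x_\infty,0)=1$.

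The key step in which Lemma \ref{isoperimetrc} enters is the production of a \emph{uniform} isoperimetric (equivalently, logarithmic Sobolev) constant on the geodesic balls $B_{q_i}(\rho,g_i(0))$ at time zero. Since $\tau_i,\eta_i\to 0$ and (1.4) holds, Lemma \ref{isoperimetrc} yields an isoperimetric ratio on these balls converging to the Euclidean one as $i\to\infty$. This gives the uniform log-Sobolev estimate at $t=0$ that is needed to propagate Perelman's $\cW$-entropy along the flow: one evaluates $\cW$ on the conjugate heat kernel $u_i$ based at $(x_i,t_i)$ and compares its value near $t=t_i$, which after rescaling by $Q_i$ is very negative (of order $-\log(t_iQ_i)$), with its value near $t=0$, where the log-Sobolev inequality from Lemma \ref{isoperimetrc} gives a lower bound.

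The main obstacle, and the point at which the argument departs from [Pe] and [TW], is that $\cW$ contains a scalar curvature term and we have no a~priori lower bound on $R(g_i(t))$. To handle this I would introduce a cutoff supported in $B_{q_i}(\rho,g_i(0))$, where the Bakry--\'Emery hypothesis (1.3) together with Lemma \ref{isoperimetrc} allows us to control the relevant integrals against $u_i$; the conjugate heat kernel is concentrated near $q_i$ at time zero, so the cutoff error is small, and the degeneracy parameters $\tau_i,\eta_i\to 0$ absorb the contribution of the scalar curvature term. The resulting monotonicity inequality yields a contradiction with the blow-up at $(x_i,t_i)$, establishing (1.5). The volume non-collapsing (1.6) then follows from (1.5) via Perelman's $\kappa$-non-collapsing applied with the same uniform log-Sobolev constant at $t=0$ provided by Lemma \ref{isoperimetrc}.
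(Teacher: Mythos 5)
Your high-level frame is the same as the paper's: argue by contradiction, run Perelman's point-picking, integrate the $\cW$-entropy monotonicity along the flow with a cutoff, and extract a contradiction at $t=0$. But the route you describe for the contradiction is not the paper's, and I do not think it closes.

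The crucial issue is what Lemma \ref{isoperimetrc} actually gives you. You write that under (\ref{maximal-volume-0}) with $\tau_i,\eta_i,\delta_i\to 0$ the isoperimetric ratio on the balls $B_{q_i}(\cdot,g_i(0))$ ``converges to the Euclidean one,'' and that this supplies the uniform log-Sobolev lower bound needed to compare $\cW(u_i,t)$ near $t=0$ against its very negative value near $\bar t_i$. But Lemma \ref{isoperimetrc} only provides a \emph{uniform} isoperimetric lower bound $C(n)^{1/n}(\cdot)^{(n+1)/n}$, not an asymptotically sharp Euclidean one. A uniform-but-not-sharp isoperimetric constant yields a log-Sobolev lower bound of the form $\cW\ge -C_0(n)$, which is useless against the fixed deficit $-\beta<0$ produced by the point-picking; Perelman's contradiction requires the $t=0$ lower bound to tend to $0$. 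Your step collapses precisely where the paper introduces its new idea.

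What the paper actually does is pass through a \emph{variational limit}, not a sharp almost-Euclidean log-Sobolev inequality on $M_i$. From the cutoff entropy estimates one gets a normalized inequality (\ref{log-sobolev}), and therefore $\lambda_i\le F_i(\sqrt{c\widehat u_i})\le -\mu_0<0$ where $\lambda_i$ is the infimum of the Rothaus-type functional $F_i$ over $W_0^{1,2}(B_{\bar x_i}(20A))$. By Rothaus' theorem a minimizer $\phi_i$ exists and solves the Euler--Lagrange equation (\ref{el-equation}). Lemma \ref{isoperimetrc} is then used \emph{only} to provide a uniform Sobolev constant for Moser iteration, yielding the $C^0$ bound (\ref{C^0-norm}), the boundary oscillation bound (\ref{osc-boundary}), and the interior gradient estimate (\ref{gradient-esti}). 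Combined with Corollary 4.8 of [WZ], which says $(M_i,\widehat g^i_0)\to\RR^n$ in the Gromov--Hausdorff sense, these allow extraction of a limit $\phi_\infty\in C_0(B_0(20A))\subset\RR^n$ solving the limit Euler--Lagrange equation (Claim \ref{claim-1}), and then $F(\phi_\infty)=\lambda_\infty<0$ contradicts Gross' log-Sobolev inequality \emph{in $\RR^n$}. The sharp log-Sobolev inequality is invoked only on Euclidean space, never on $M_i$; this is exactly how the authors avoid the gap in your plan.

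Two further inaccuracies. First, the scalar curvature term is not ``absorbed by the degeneracy parameters'' through a cutoff: it is handled by tracing the Bakry--\'Emery bound to get $R+\Delta f_i\ge -(n-1)$, integrating $\Delta f_i$ by parts against $\tilde u_i$, and controlling $\int\langle\nabla f_i,\nabla\tilde u_i\rangle$ by $\eta_i\sqrt{\int|\nabla\tilde p_i|^2\tilde u_i}$, which is then absorbed into the Dirichlet term of the entropy. Second, the conjugate heat kernel at $t=0$ concentrates near $\bar x_i$ (the blow-up point chosen by point-picking), not near $q_i$; the cutoffs in the paper are centered at $\bar x_i$, and the argument needs (\ref{hu}) to control the mass outside a ball around $\bar x_i$, not $q_i$.
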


\begin{proof}
 By  scaling the metric, we may assume $r=1$ in the theorem.   As in   [Pe], we  use   the argument    by contradiction to prove  (\ref{peudo-cuvature}).   On contrary,  we
 suppose that for some $\alpha>0$, there are $\tau_i, \eta_i,\delta_i, \epsilon_i$ which  approaching zero as $i\to\infty$, and there are  a sequence of manifolds
 $\{(M_i,g^i)\}$  which satisfying (\ref{pseudo-condition}) and  (\ref{maximal-volume-0}) with some points $q_i\in M_i$   such that (\ref{peudo-cuvature}) doesn't   hold at $( x_i, \bar t_i)$  for   some points $x_i\in B_{q_i}(\epsilon_i ,g^i_0)$  some time $\bar t_i\le\epsilon_i^2 $ along   the Ricci flows  $(M_i,  g^i_t= g^i(\cdot,  t))$  with  $g^i=g^i_0$ as the initial metrics.
Without the loss of generality, we may also assume that
\begin{align}\label{peudo-cuvature-modified}
|Rm(x,t)|\leq \alpha t^{-1}+ (\epsilon_i )^{-2}, ~\forall~t\in (0,\overline t_i], x\in B_{q_i}(\epsilon_i, g^i_0).
\end{align}
 Then as showed in [Pe],  for any $A<\frac{1}{100n \epsilon_i}$,  there exist  points $(\bar{x}_i,\bar{t}_i)$ such that  for any $(x,t)$ with
 $$\bar{t}_i-\frac{1}{2}\alpha Q^{-1}\leq t\leq \bar{t}_i, d_{g_t^i}(x,\bar{x}_i)\leq\frac{1}{10}AQ^{-\frac{1}{2}},$$
\begin{align}\label{curvature-blow-up}
|Rm(x,t)|\leq 4Q,
\end{align}
where $Q=|Rm(\bar{x}_i,\bar{t}_i)|\to\infty.$

Now we consider a solution $u_i(x,t)=(4\pi(\bar{t}_i-t))^{-\frac{n}{2}}e^{-p_i(t,x)}$ of  the conjugate heat equation associated to  the flow $(M_i,  g^i_t)$
 which starts  from a delta function $\delta(\bar{x}_i,\bar{t}_i)$. Namely,   $u_i(x,t)$ satisfies
$$\square^*u_i(x,t)=(-\frac{\partial}{\partial t}-\Delta+R) u_i(x,t)=0,$$
 where $R=R(\cdot,t)$ is the scalar curvature of $g^i_t$. Then the  function
$$v_i(x,t)=[(\bar{t}_i-t)(2\Delta p_i-|\nabla p_i|^2+R)+p_i-n]u_i$$
is nonpositive.
 Moreover,
 there exists  a positive constant $\beta$ such that
 \begin{align}\label{beta-bound}
 \int_{B_{\bar{x}_i}(\sqrt{\bar{t}_i-\tilde{t}_i}, g^i_{\tilde t_i})} v_i\leq -\beta,\end{align}
  for some $\tilde{t}_i\in[\bar{t}-\frac{1}{2}\alpha Q^{-1},\bar{t}_i]$,  when $i$ is large enough [Pe].

Let $\phi$ be  a cut-off function which is equal to 1 on [0,1] and decreases to 0 on [1,2].  Moreover, it satisfies $\phi''\geq -10\phi, (\phi')^2\leq 10\phi.$   Putting $h_i=\phi(\frac{\tilde{d}_i(x,t)}{10 A\sqrt{\bar{t}_i}})$, where $\tilde{d}_i(x,t)=d_{g^i_t}(\bar{x}_i, x)+200 n\sqrt{t}$.
Then by Lemma 8.3 in [Pe] with the help of  (\ref{peudo-cuvature-modified}), we get
\begin{align}
\nonumber (\frac{\partial}{\partial t}-\Delta) h_i &=\frac{1}{10A\sqrt{\bar{t_i}}}(d_t-\Delta d+ \frac{100n}{\sqrt{\bar{t}}}) \phi'-(\frac{1}{10A\sqrt{\bar{t_i}}})^2\phi''\\
&\leq (\frac{1}{10A\sqrt{\bar{t_i}}})^2 10\phi,  ~\forall~t\in (0,\overline t_i], x\in B_q(\epsilon, g^i_0).\notag
\end{align}
 It follows
\begin{align}
\frac{d}{dt}\int_{M_i}(-h_iv_i)&=\int_{M_i}\square h_i(- v_i)+\int_{M_i}h_i\square^* v_i\notag\\
&\le -\frac{1}{100A^2\bar{t}_i}\int_{M_i} h_i v_i,\notag
\end{align}
where $\square= \frac{\partial}{\partial t}-\Delta$ and  we used the fact  that $\square^* v_i\le 0$ [Pe].  Thus by (\ref{beta-bound}), we obtain
\begin{align}\label{hv}\beta(1-A^{-2})\leq-\int_{M_i} ( h_iv_i)(0,\cdot).\end{align}
Similarly, we can show
\begin{align}\label{normalization}
\int_{M_i} (\hat {h}_iu_i )(\cdot,0)\ge 1-4A^{-2},
\end{align}
 where  $\hat {h}_i=\phi(\frac{\tilde{d}_i(x,t)}{5 A\sqrt{\bar{t}_i}})$. The above implies that
\begin{align}\label{hu}
\int_{B_{\bar{x}_i}(20A\sqrt{\bar{t}_i})\setminus B_{\bar{x}_i}(10A\sqrt{\bar{t}_i})}u_i(\cdot, 0)\leq 1-\int_{M_i} \hat {h}_i u_i \leq4A^{-2}.
\end{align}

On the other hand,   by (\ref{hu}),   we see that
\begin{align}
&-\int_{M_i} (h_iv_i)=\int_M[\bar{t}_i(-2\Delta p_i+|\nabla p_i|^2-R)-p_i+n]h_i u_i\notag\\
&=\int_{M_i}[-\bar{t}_i|\nabla \tilde{p}_i|^2-\tilde{p}_i+n]\tilde{u}_i+\int_{M_i}[\bar{t}_i(\frac{|\nabla h_i|^2}{h_i}-Rh_i)-h_i\ln h_i]u_i
\notag\\
&\leq \int_{M_i}[-\bar{t}_i|\nabla \tilde{p}_i|^2-\tilde{p}_i+n]\tilde{u}_i-\bar{t}_i\int_{M_i}R\tilde{u}_i+A^{-2}+100\epsilon^2,\notag
\end{align}
 where $\tilde{u}_i=h_iu_i$ and $\tilde{p}_i=p_i-\ln h_i$.
Note that  $R+\Delta f_i\geq -(n-1)$.   Then
\begin{align}
\nonumber -\int_{M_i}R\tilde{u}_i\leq n-1-\int_{M_i}\langle \nabla f_i,\tilde{u}_i\rangle \leq n-1+\eta_i\int_{M_i}|\nabla \tilde{u}_i|\\
\nonumber \leq n-1+\eta_i\sqrt{\int_{M_i}|\nabla \tilde{p}_i|^2\tilde{u}_i}\leq n+\eta_i\int_{M_i}|\nabla \tilde{p}_i|^2\tilde{u}_i.
\end{align}
Hence, by (\ref{hv}),  we get
$$\int_{M_i}[-\bar{t}_i(1-\eta_i)|\nabla \tilde{p}_i|^2-\tilde{p}_i+n]\tilde{u}_i\ge \beta(1-A^{-2}) -(100+n)\epsilon^2-A^{-2}.$$
Therefore,  by  rescaling  these metrics $g^i_0$  to  $ \widehat {g^i_0}=\frac{1}{2}[\bar{t}_i(1-\eta_i)]^{-1} g^i_0$,  we derive
\begin{align}\label{log-sobolev}
\int_{B_{\bar{x}_i}(20A)}[-\frac{1}{2}|\nabla \tilde{p}_i|^2-\tilde{p}_i+n]\widehat{u}_i\ge (1-\eta_i)^{\frac{n}{2}} \mu>\mu_0>0,
\end{align}
where   $\widehat{u}_i=(2\pi)^{-\frac{n}{2}}e^{-\tilde{p}_i}$ and  $\mu=\beta(1-A^{-2})-(100+n)\epsilon^2-A^{-2}$.
Normalize   $\widehat{u}_i$ by multiplying a constant $c$  so that
$$\int_{B_{\bar{x}_i}(20A)}c\widehat{u}_i=1.$$
 By (\ref{normalization}),  it is easy to  see   that (\ref{log-sobolev})  still holds for the normalized  $\widehat{u}_i$.

Next as in [TW].  we  introduce a  functional
$$F_i(u)=\int_{B_{\bar{x}_i}(20A)}(2|\nabla u|^2-2u^2\log u-n(1+\log\sqrt{2\pi})u^2),$$
 defined  for  any  nonnegative functions $u\in W_0^{1,2}(B_{\bar{x}_i}(20A), \widehat {g^i_0} )$ with
 $$\int_{B_{\bar{x}_i}(20A)}u^2=1.$$
Clearly,  by (\ref{log-sobolev}),   we have
 \begin{align}\label{low-bound-u}\lambda_i\leq F_i(\sqrt{c\widehat{u}_i})\leq -\mu_0<0,\end{align}
where $\lambda_i=\inf_{u\in W_0^{1,2}(B_{\bar{x}_i}(20A), \widehat {g^i_0} )} F_i(u)$.
According to [Ro],   the infinity of $ F_i(u)$ can be achieved by a  minimizer $\phi_i$ which   satisfies
 the Euler-Lagrange equation  on $(B_{\bar{x_i}}(20A), \widehat {g^i_0})$,
 \begin{align}\label{el-equation}
-2\Delta \phi_i(x)-2\phi_i(x)\log\phi_i(x)-n(1+\log\sqrt{2\pi})\phi_i(x)=\lambda_i \phi_i(x).
\end{align}

We need  to estimate the $L^\infty$-norms  and gradient norms of those $\phi_i$.
Note that  $\log x\leq \frac{n}{2}x^{\frac{2}{n}}$.  Then
\begin{align}&\lambda_i+n(1+\log\sqrt{2\pi})\notag\\
\nonumber & =2\int_{B_{\bar{x}_i}(20A)}|\nabla \phi_i(x)|^2-2\int_{B_{\bar{x}_i}(20A)}\phi_i(x)^2\log\phi_i(x)\\
\nonumber &\geq 2\int_{B_{\bar{x}_i}(20A)}|\nabla \phi_i(x)|^2-n \int_{B_{\bar{x}_i}(20A)}\phi_i(x)\phi_i(x)^{\frac{n+2}{n}}\\
\nonumber &\geq 2\int_{B_{\bar{x}_i}(20A)}|\nabla \phi_i(x)|^2-n (\int_{B_{\bar{x}_i}(20A)}\phi_i(x)^{\frac{2n}{n-2}})^{\frac{n-2}{2n}}\\
\nonumber &\geq  2\int_{B_{\bar{x}_i}(20A)}|\nabla \phi_i(x)|^2-\{a^2 (\int_{B_{\bar{x}_i}(20A)}\phi_i(x)^{\frac{2n}{n-2}})^{\frac{n-2}{2n}}+\frac{n^2}{4a^2}\}.
\end{align}
Since   the  Sobolev constants  $C_S$ are uniformly bounded below on $(B_{\bar{x}}(\frac{1}{2})$, $g^i_0)$   according to Lemma  \ref{isoperimetrc},
 by choosing  the number $a$ small enough,  we see that $\lambda_i$  is uniformly bounded below and
   $\int_{B_{\bar{x}_i}(20A)}|\nabla \phi_i(x)|^2$ is uniformly bounded.
Applying  the standard Moser iteration method  to (\ref{el-equation}),  we  will get
\begin{align}\label{C^0-norm}
|\phi_i(x)|< C_1(\mu_0, n, C_S).
\end{align}
As a consequence, $\phi_i(x)$ is  an almost sub-solution of the  Lapalace equation.  Hence,   we can also get a uniform oscillation estimate for $\phi_i(x)$ near
the  boundary of $B_{\bar{x}_i}(20A)$.  In fact, as in [WT],  we can show that for any $w\in \partial B_{\bar{x}_i}(20A)$
\begin{align}\label{osc-boundary} Osc_{B_w(2^{-N})}(\phi_i)< C\gamma^{N-1}+\frac{\gamma^{N-1}-4^{-N+1}}{4(4\gamma-1)}, \end{align}
for some uniform $C$,  where $N\ge 2$ is  any  integer and    the number $\gamma$  can be chosen  in  the interval  $(\frac{1}{2},1)$.

To get  the interior  gradient estimate for $\phi_i(x)$,  we will also use the Moser iteration method. For simplicity,  we let $\phi=\phi_i$ for each $i$.   First we note that by (\ref{el-equation}) and the estimate (\ref{C^0-norm}),  it holds
\begin{align}
\langle \nabla \phi,\nabla \Delta \phi\rangle\geq -C_2(\nu, n, C_S)|\nabla \phi(x)|^2.
\end{align}
Then by   the Bochner identity,
 $$\frac{1}{2}\Delta|\nabla \phi|^2=|\text{ hess }\phi|^2+R_{ij}\phi_i\phi_j+\langle\nabla\phi,\nabla \Delta \phi\rangle,$$
 we obtain
  \begin{align}\label{inequality}
\frac{1}{2}\Delta|\nabla \phi|^2\geq |\text{ hess }\phi|^2-f_{ij}\phi_i\phi_j-(C_2+(n-1)\tau^2)|\nabla \phi|^2.
\end{align}

Let
 $\rho$  be a cut-off function on  the interval  $[0, 20A]$ which is supported in a subset of $[0, 20As)$, where $s<1$.  Then
multiplying both sides of (\ref{inequality}) by $\rho(d(\bar{x}_i,.) w^{p}$,  where  $w=|\nabla \phi|^2$ and $p\ge 0$, we get
\begin{align}\label{gradient-integral-1}
\nonumber &\frac{2p}{(p+1)^2}\int_{B_{\bar{x}_i}(20A)}\rho(d(\bar{x}_i,.))|\nabla w^{\frac{p+1}{2}}|^2&\\
\nonumber &=\frac{1}{2}\int_{B_{\bar{x}_i}(20A)}\rho(d(\bar{x}_i,.))(-\Delta w)w^p-\frac{1}{2}\int_{B_{\bar{x}_i}(20A)}\langle\nabla \rho(d(\bar{x}_i,.)),\nabla w\rangle w^p&\\
\nonumber &\leq\int_{B_{\bar{x}_i}(20A)}\rho(d(\bar{x}_i,.))w^p (- |\text{ hess }\phi|^2+f_{ij}\phi_i\phi_j+C_2|\nabla \phi|^2)\notag\\
&-\frac{1}{2} \int_{B_{\bar{x}_i}(20A)}w^p \langle\nabla \rho(d(\bar{x}_i,.)),\nabla w\rangle.
\end{align}
On the other hand,    using the integration  by parts,  we have
\begin{align}
\nonumber &\int_{B_{\bar{x}_i}(20A)}\rho(d(\bar{x}_i,.)) w^pf_{ij}\phi_i\phi_j\notag\\
&=-\int_{B_{\bar{x_i}}(20A)}\rho_lf_l\phi_i\phi_jw^p+\int_{B_{\bar{x}_i}(20A)}\rho f_j\phi_{ij}\phi_jw^p\notag\\
\nonumber &-p\int_{B_{\bar{x}_i}(20A)}\rho w^{p-1}f_iw_j\phi_i\phi_j-\int_{B_{\bar{x}_i}(20A)}\rho w^p f_i\phi_i\Delta \phi.&
\end{align}
Observe that
\begin{align}
\nonumber|\int_{B_{\bar{x}_i}(20A)}\rho_lf_l\phi_i\phi_jw^p|&\leq \eta\int_{B_{\bar{x}_i}(20A)}|\rho'|w^{p+1},&\\
\nonumber|\int_{B_{\bar{x}_i}(20A)}\rho f_j\phi_{ij}\phi_jw^p|&\leq 2\eta\int_{B_{\bar{x}_i}(20A)}\rho(|\text{ hess }\phi|^2+w)w^p,&\\
\nonumber|\int_{B_{\bar{x}_i}(20A)}\rho w^{p-1}f_iw_j\phi_i\phi_j|&\leq 2\eta(\int_{B_{\bar{x}_i}(20A)}\rho w^{p-1}|\nabla w|^2+ \int_{B_{\bar{x}_i}(20A)}\rho w^{p+1}),&\\
\nonumber|\int_{B_{\bar{x}_i}(20A)}\rho w^p f_i\phi_i\Delta \phi|&\leq   C_3(\nu_0, n, C_s)\eta\int_{B_{\bar{x}_i}(20A)}\rho w^{p+\frac{1}{2}}.&
\end{align}
Hence,  by (\ref{gradient-integral-1}), we get
\begin{align}
\nonumber &\frac{p}{(p+1)^2}\int_{B_{\bar{x}_i}(20A)}\rho(d(\bar{x}_i,.))|\nabla w^{\frac{p+1}{2}}|^2&\notag\\
&\le C_4\int_{B_{\bar{x}_i}(20A)} (\rho+ p\eta\rho+ \rho') w^{p+1} +C_5
\int_{B_{\bar{x}_i}(20A)} \eta\rho w^{p+\frac{1}{2}}.\notag
\end{align}
Since we may assume that $w\ge 1$, we deduce
\begin{align}\label{gradient-integral-2}
&\frac{p}{(p+1)^2}\int_{B_{\bar{x}_i}(20A)}\rho(d(\bar{x},.))|\nabla w^{\frac{p+1}{2}}|^2\notag\\
& \le C_5'\int_{B_{\bar{x}_i}(20A)} (\rho+ p\eta\rho+ \rho') w^{p+1} , ~\forall ~p\ge 0.
\end{align}
Note that  the  Sobolev constants  are uniformly bounded below on $(B_{\bar{x}_i}(20A), $ $\widehat{ g^i_0})$. Therefore,  by  choosing
the suitable cut-off functions $\eta$ in (\ref{gradient-integral-2}), we  use
 the iteration method  to derive
\begin{align}\label{gradient-esti} \|\nabla\phi_i\|_{C^0(B_{\bar{x}_i}(20sA))}^2\leq  C_6(1+\int_{B_{\bar{x}_i}
(20A)}|\nabla\phi_i|^2)< C.
\end{align}

It remains to analyze the limit of $\phi_i$.  According to  Corollary 4.8 in [WZ],  we see that $(M_i, \widehat {g^i_0})$ converge to the euclidean space $\mathbb{R}^n$ in
the  Gromov-Hausdorff topology.  Thus  by the estimates (\ref{C^0-norm}) and (\ref{gradient-esti}), there exists a subsequence
of $\phi_i$ which converge to a continuous  limit  $\phi_\infty\ge 0$ on the standard  $B_0(20A) \subset \mathbb{R}^n$.

\begin{claim}\label{claim-1}  $\phi_\infty$ is a solution  of the following equation on  $B_0(20A)$,
\begin{align}\label{limit-equation-euclidean}
-2\Delta \phi_\infty-2\phi_\infty\log\phi_\infty-n(1+\log\sqrt{2\pi})\phi_\infty=\lambda_\infty \phi_\infty,
\end{align}
where $\lambda_\infty<0$.
\end{claim}

 As in [TW], to prove (\ref{limit-equation-euclidean}),  it suffices to show that
 \begin{align}
 -\phi_\infty=\int_{B_0(20A)}G(z,y)(\frac{\lambda_\infty+n(1+\log\sqrt{2\pi})}{2}+\log\phi_\infty)\log\phi_\infty.
 \end{align}
Here  $G(z,y)$ is the Green function  on the ball $B_0(20A)$, which is given by
  $$G(z,y)=\frac{1}{(n-2)nc_n} (d^{2-n}(z,y)-d^{2-n}(0,z)d^{2-n}(z^*,y)),$$
where $z^*$ is the conjugate point of $z$.

Choose  a sequence $z_k\rightarrow z, z_k^*\rightarrow z^*$.   By the  Laplacian comparison for the distance functions on  $(B_{\bar{x_i}}(20A), \widehat {g^i_0})$ [WW],
 \begin{align}
&\Delta  d(z_k,.) \leq(n-1)\tau_i\coth\tau_i d(z_k,.)+ 2\eta_i\notag\\
&\leq \frac{n-1}{d(z_k,.)}+(n-1)\tau_i+2\eta_i, \notag\end{align}
we have
\begin{align}\Delta  d^{2-n}(z_k,.)+(n-2)d^{1-n}(z_k,.)((n-1)\tau_i+2\eta_i)\geq 0.\notag
\end{align}
It follows
\begin{align}
 &\int_{B_{z_k}(20A)\setminus{z_k}}|\Delta  d^{2-n}(z_k,.)|\notag\\
&\leq \int_{B_{z_k}(20A)\setminus\{z_k\}}|\Delta  d^{2-n}(z_k,.)+(n-2)d^{1-n}(z_k,.)((n-1)\tau_i+2\eta_i)|\notag\\
&+\int_{B_{z_k}(20A)\setminus\{z_k\}}(n-2)d^{1-n}(z_k,.)((n-1)\tau_i+2\eta_i).&
\end{align}
By  a direct computation,   we obtain
$$\int_{B_{z_k}(20A)\setminus\{z_k\}}|\Delta  d^{2-n}(z_k,.)|\rightarrow 0,~\text{as}~k\to \infty.$$
Note that
\begin{align}
 &\int_{B_{z_k}(20A)}d^{2-n}(z_k,y)\Delta\phi_k(y)=\notag\\
& (n-2)nc_n\phi_k(z_k)+\int_{B_{x_k}(20A)\setminus\{z_k\}}\phi_k(y)\Delta  d^{2-n}(z_k,y).\notag
\end{align}
Hence we derive that
\begin{align}\label{point-in}
\lim_{k \rightarrow\infty}\int_{B_{z_k}(20A)}d^{2-n}(z_k,y)\Delta\phi_k(y)=(n-2)nc_n\phi_\infty(z).
\end{align}
Similarly,  since   $z_k^*$ is outside $B_{x_k}(20A)$,  we have
\begin{align}\label{point-out}
\lim_{k \rightarrow\infty}\int_{B_{z_k}(20A)}d^{2-n}(z_k^*,y)\Delta\phi_k(y)=0.
\end{align}

Combining (\ref{point-in}) and (\ref{point-out}),  we get
\begin{align}
&-\phi_\infty(z)\notag\\
&=-\lim_{k \rightarrow\infty}\int_{B_{z_k}(20A)}(d^{2-n}(z_k,y)-d^{2-n}(x_k,z_k)d^{2-n}(z_k^*,y))\Delta\phi_k(y)\notag\\
&=\lim_{k \rightarrow\infty}\int_{B_{z_k}(20A)}(d^{2-n}(z_k,y)-d^{2-n}(x_k,z_k)d^{2-n}(z_k^*,y))\notag\\
&\times
(\frac{\lambda_k+n(1+\log\sqrt{2\pi})}{2}+\log\phi_k)\phi_k\notag\\
&=\int_{B_0(20A)}G(z,y)(\frac{\lambda_\infty+n(1+\log\sqrt{2\pi})}{2}+\log\phi_\infty)\phi_\infty.\notag
\end{align}
The claim is proved.

 By  the estimates (\ref{osc-boundary}),   $\phi_\infty$ is  in fact in  $C_0(B_0(20A))$.  Thus  by (\ref{limit-equation-euclidean}), we get
$$F(\phi_\infty)=\int_{B_0(20A)}(2|\nabla \phi_\infty|^2-2\phi_\infty^2\log \phi_\infty-n(1+\log\sqrt{2\pi})\phi_\infty^2)=
\lambda_\infty<0,$$
which  is a contradiction to the Log-Sobolev inequality in  $\mathbb{R}^n$ [Gr].  The proof of (\ref{peudo-cuvature}) is completed.
\end{proof}

To obtain  (\ref{volume-estimate}),  it suffices  to  estimate  the lower bound of the injective radius at $x$.  This  can  be done by using the  same blowing-up
argument as in the proof of (\ref{peudo-cuvature}) (cf. [Pe], [TW]). We leave it to the readers.

\vskip3mm

\section{A Ricci curvature estimate}

In this section, we prove several  technical lemmas which will be used in next sections.   From now on   we assume that $M$  is an $n$-dimensional  Fano manifold with a reductive  holomorphic vector field $X$ [TZ1]. As in [TZ3], we consider the following modified K\"ahler-Ricci flow,
\begin{align}\label{kr-flow}  \frac{\partial}{\partial t} g=-\text{Ric} (g)+ g+ L_X g,\end{align}
with a $K_X$-invariant initial K\"ahler metric $g_0$  in  $2\pi c_1(M)$, where $K_X$ is the one-parameter compact subgroup
generated by  $\text{im}(X)$. Thus  $L_Xg$ is a real valued complex hessian tensor. If we scale $g_0$ by  $\frac{1}{\lambda}$, where $0<\lambda\leq  1$,
then  (\ref{kr-flow}) becomes
\begin{align}\label{rescaling-equation-1} \frac{\partial}{\partial t} g=-\text{Ric}(g)+\lambda g+\lambda L_X g.
\end{align}
Clearly,  the flow is solvable for any $t>0$ and $\omega_{g_t}\in \frac{2\pi}{\lambda}c_1(M)$, where $g_t=g(\cdot,t)$.

By a direct computation from the flow (\ref{rescaling-equation-1}), we  see that
\begin{align}\label{evol1}
 \frac{\partial}{\partial t}R_{i\bar{j}}=&\Delta R_{i\bar{j}}-R_{i\bar{k}}R_{k\bar{j}}+R_{l\bar{k}}R_{i\bar{j}k\bar{l}}\notag\\
  &-\lambda\Delta \theta_{i\bar{j}}+\frac{\lambda}{2}(R_{i\bar{k}}\theta_{k\bar{j}}+R_{k\bar{j}}\theta_{i\bar{k}})-\lambda R_{i\bar{j}k\bar{l}}\theta_{l\bar{k}}
\end{align}
  and
\begin{align}
\frac{\partial}{\partial t}\theta_{i\bar{j}}&=L_X(-\text{ Ric}(g)+\lambda g+\lambda L_X g),\notag\end{align}
where $\theta=\theta_{g_t}$  is a  potential of $X$ associated to $g_t$ such that  $\theta_{i\bar{j}}=L_X g_t$.
Thus if we let  $H=\text{Ric}(g)-\lambda g-\lambda L_X g$,  then we have
\begin{align}\label{ricci-flow-2}
\frac{\partial}{\partial t}H=\Delta H+ \lambda L_X H+ \Lambda (H,Rm),
\end{align}
where $\Lambda$ is a linear operator with bounded coefficients with respect to  the metric $g_t$ and $Rm=Rm(\cdot,t)$ is the sectional curvature of $g_t$.

Moreover,  we have

\begin{lem}\label{evolution-R}
\begin{align}&\frac{\partial}{\partial t}(R-\lambda\Delta\theta-n\lambda)\notag\\
&=\lambda(R-\lambda\Delta\theta-n\lambda)+\Delta(R-\lambda\Delta\theta-n\lambda)-\lambda\Delta \frac{\partial}{\partial t}\theta\notag\\
&+|\text{Ric}(g)-\lambda g-\lambda\sqrt{-1}\partial\bar{\partial}\theta|^2.
\end{align}
\end{lem}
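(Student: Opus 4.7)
The plan is to directly compute $\partial_t R$ and $\partial_t \Delta\theta$ using the modified flow $\partial_t g_{i\bar j} = -H_{i\bar j}$, where $H_{i\bar j} = R_{i\bar j}-\lambda g_{i\bar j}-\lambda\theta_{i\bar j}$, and then combine. The right-hand side of the claim suggests that $|H|^2$ should appear as the natural Bochner-type nonlinearity, since the quantity being differentiated is precisely $\mathrm{tr}_{g}H = g^{i\bar j}H_{i\bar j} = R-\lambda\Delta\theta-n\lambda$. Throughout I would work in normal coordinates at a point, so that raising/lowering is trivial, and exploit the Kähler identity $g^{i\bar j}R_{i\bar j k\bar l}=R_{k\bar l}$.

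First I would compute $\partial_t R$ by writing $\partial_t R = g^{i\bar j}\partial_t R_{i\bar j}+(\partial_t g^{i\bar j})R_{i\bar j}$. Tracing the evolution (2.2) against $g^{i\bar j}$, the two curvature-squared terms $-R_{i\bar k}R_{k\bar j}$ and $R_{l\bar k}R_{i\bar j k\bar l}$ both trace to $|\mathrm{Ric}|^2$ and cancel; similarly, the two mixed $R\theta$ terms $\tfrac{\lambda}{2}(R_{i\bar k}\theta_{k\bar j}+R_{k\bar j}\theta_{i\bar k})$ and $-\lambda R_{i\bar j k\bar l}\theta_{l\bar k}$ both trace to $\lambda\langle\mathrm{Ric},L_X g\rangle$ and cancel. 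What survives is
\begin{equation*}
g^{i\bar j}\partial_t R_{i\bar j}=\Delta R-\lambda\Delta(\Delta\theta).
\end{equation*}
Using $\partial_t g^{i\bar j}=R^{i\bar j}-\lambda g^{i\bar j}-\lambda\theta^{i\bar j}$, the second piece contributes $|\mathrm{Ric}|^2-\lambda R-\lambda\langle\mathrm{Ric},L_X g\rangle$, giving
\begin{equation*}
\partial_t R=\Delta R-\lambda\Delta^2\theta+|\mathrm{Ric}|^2-\lambda R-\lambda\langle\mathrm{Ric},L_X g\rangle.
\end{equation*}

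Next I would compute $\partial_t\Delta\theta=\partial_t(g^{i\bar j}\theta_{i\bar j})$. Since $\theta_{i\bar j}=\partial_i\partial_{\bar j}\theta$ and partial derivatives commute with $\partial_t$, one has $g^{i\bar j}\partial_t\theta_{i\bar j}=\Delta(\partial_t\theta)$, while $(\partial_t g^{i\bar j})\theta_{i\bar j}=\langle\mathrm{Ric},L_X g\rangle-\lambda\Delta\theta-\lambda|L_Xg|^2$. Combining,
\begin{equation*}
-\lambda\partial_t\Delta\theta=-\lambda\Delta(\partial_t\theta)-\lambda\langle\mathrm{Ric},L_X g\rangle+\lambda^2\Delta\theta+\lambda^2|L_X g|^2.
\end{equation*}

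Finally I would match the two sides. Adding the expressions for $\partial_t R$ and $-\lambda\partial_t\Delta\theta$ yields
\begin{equation*}
\partial_t(R-\lambda\Delta\theta-n\lambda)=\Delta(R-\lambda\Delta\theta)+|\mathrm{Ric}|^2-\lambda R-2\lambda\langle\mathrm{Ric},L_X g\rangle+\lambda^2\Delta\theta+\lambda^2|L_X g|^2-\lambda\Delta\partial_t\theta.
\end{equation*}
On the other hand, using $\langle\mathrm{Ric},g\rangle=R$, $|g|^2=n$, and $\langle g,L_X g\rangle=\Delta\theta$, direct expansion gives
\begin{equation*}
|H|^2=|\mathrm{Ric}|^2-2\lambda R-2\lambda\langle\mathrm{Ric},L_Xg\rangle+n\lambda^2+2\lambda^2\Delta\theta+\lambda^2|L_Xg|^2,
\end{equation*}
and inserting this into $\lambda(R-\lambda\Delta\theta-n\lambda)+\Delta(R-\lambda\Delta\theta-n\lambda)-\lambda\Delta\partial_t\theta+|H|^2$ reproduces the same expression. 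The main obstacle is purely bookkeeping: every step is routine, but one must track signs and Kähler contractions carefully so that the curvature-squared and cross-type terms organize themselves into the clean square $|H|^2$; no analytic or geometric difficulty beyond this arises.
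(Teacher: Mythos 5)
Your computation is correct. The paper states the lemma without proof, presenting it (together with (2.2)) as "a direct computation from the flow," and your argument is precisely the direct calculation one would expect: trace (2.2) against $g^{i\bar j}$, observe the cancellations $g^{i\bar j}(-R_{i\bar k}R_{k\bar j}+R_{l\bar k}R_{i\bar j k\bar l})=0$ and $g^{i\bar j}\bigl(\tfrac{\lambda}{2}(R_{i\bar k}\theta_{k\bar j}+R_{k\bar j}\theta_{i\bar k})-\lambda R_{i\bar j k\bar l}\theta_{l\bar k}\bigr)=0$ coming from the Kähler identity $g^{i\bar j}R_{i\bar j k\bar l}=R_{k\bar l}$, add in the contribution from $\partial_t g^{i\bar j}=H^{i\bar j}$, handle $\partial_t(\Delta\theta)$ by commuting $\partial_t$ past $\partial_i\partial_{\bar j}$, and then recognize the resulting combination $|{\rm Ric}|^2-\lambda R-2\lambda\langle{\rm Ric},L_Xg\rangle+\lambda^2\Delta\theta+\lambda^2|L_Xg|^2$ as the expansion of $\lambda(R-\lambda\Delta\theta-n\lambda)+|H|^2$ using $\langle{\rm Ric},g\rangle=R$, $|g|^2=n$, $\langle g,L_Xg\rangle=\Delta\theta$. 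I verified the final matching of both sides and it is an identity. Since the source contains no alternative proof, there is nothing substantive to contrast; your write-up simply supplies the bookkeeping the paper leaves implicit.
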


The following lemma is a  consequence of  Theorem \ref{pseudolocality-theorem} in Section 1.

\begin{lem}\label{modified-pseudo} Let $g=g_t$ be a solution of (\ref{rescaling-equation-1}) with  $\omega_{g_0}\in
\frac{2\pi}{\lambda}c_1(M)$.  Suppose that  there exists a small $\delta\le \delta_0<<1$ such that
$g_0$ satisfies:
\begin{align}  &i)~\text{Ric}(g_0)+\lambda L_X{g_0} \geq -(n-1)\delta^2 g_0;\notag\\
&ii)~|X|_{g_0}(x)\leq \frac{\delta}{\lambda}, \forall~ x\in B_q(1,g_0);\notag \\
&iii)~\text{vol}(B_q(1,g_0))\geq (1-\delta)c_n. \notag
\end{align}
Then
 $$
|Rm(x,t)|\leq   4 t^{-1}, \forall ~x\in B_q(\frac{3}{4},g_0),~ t\in (0,2\delta]$$
and
$$
\text{vol}(B_x(\sqrt{t},g(t)))\geq \kappa(n)t^n, $$
where $\kappa=\kappa(n)$  is a uniform constant.
\end{lem}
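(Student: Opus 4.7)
The plan is to reduce Lemma~\ref{modified-pseudo} to Theorem~\ref{pseudolocality-theorem} by transforming the modified K\"ahler-Ricci flow (\ref{rescaling-equation-1}) into Hamilton's Ricci flow (\ref{hamilton-ricci-flow}) via two standard changes of variable. First, pull $g_t$ back along the one-parameter family of biholomorphisms $\phi_t$ generated by an appropriate real multiple of $X$, so that the drift term cancels and $\tilde g_t := \phi_t^{*} g_t$ satisfies
\[
\frac{\partial \tilde g_t}{\partial t} = -\text{Ric}(\tilde g_t) + \lambda \tilde g_t.
\]
Then the time-and-scale change $\hat g_{s} := (1-2\lambda s)\,\tilde g_{t(s)}$ with $t(s) = -\lambda^{-1}\log(1-2\lambda s)$ converts this to Hamilton's flow
\[
\frac{\partial \hat g_s}{\partial s} = -2\,\text{Ric}(\hat g_s), \qquad s \in [0,\tfrac{1}{2\lambda}).
\]

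Next, one checks that (i)-(iii) imply the hypotheses of Theorem~\ref{pseudolocality-theorem} at $\hat g_0 = g_0$. With the Bakry-\'Emery weight $f := \lambda \theta$, where $\theta = \theta_{g_0}$ is the potential of $X$, the $(1,1)$-part of $\text{hess}_{g_0} f$ is $\lambda L_X g_0$ while $\nabla_{g_0} f$ is proportional to $\text{Re}(X)$. So (i) gives $\text{Ric}(g_0)+\text{hess}_{g_0} f \geq -(n-1)\delta^2 g_0$ (the $(2,0)+(0,2)$ part being absorbed since $X$ is reductive), (ii) gives $|\nabla f|_{g_0} \lesssim \delta$ on $B_q(1,g_0)$, and (iii) is the volume hypothesis (\ref{maximal-volume-0}). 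Applying Theorem~\ref{pseudolocality-theorem} at $\hat g_0$ with $\alpha = 1$ and fixed radius $r$ of unit order, one obtains $|Rm(\hat g_s)|(x) \leq s^{-1}+(\epsilon r)^{-2}$ and $\text{vol}(B_x(\sqrt{s},\hat g_s)) \geq \kappa(n)s^{n/2}$ for $s\in(0,(\epsilon r)^2]$.

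These estimates are then transported back. Since $\phi_t:(M,\tilde g_t)\to(M,g_t)$ is an isometry, $|Rm(g_t)|(\phi_t(x)) = |Rm(\tilde g_t)|(x)$; since $\hat g_s = (1-2\lambda s)\tilde g_{t(s)}$ with $1-2\lambda s \in [1-o(1),1]$ and $s \approx t/2$ for small $t$, the bound $\leq s^{-1} + (\epsilon r)^{-2}$ becomes $\leq 4 t^{-1}$ for $t \in (0, 2\delta]$ once $\delta_0 = \delta_0(n)$ is chosen small. The displacement of $\phi_t$ is also controlled: since $|\lambda X|_{g_0} \leq \delta$ on $B_q(1,g_0)$ by (ii), $\phi_t$ moves points by at most $\sim\delta\cdot t \leq 2\delta^2 \ll 1$ for $t \leq 2\delta$, so $\phi_t(B_q(3/4,g_0)) \subset B_q(1,g_0)$, ensuring that source points lie in the domain of the hypotheses. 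The volume bound follows analogously.

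The main obstacle is that Theorem~\ref{pseudolocality-theorem} gives a conclusion only inside $B_q(\epsilon r,g_0)$, whereas Lemma~\ref{modified-pseudo} asks for the bound on the larger ball $B_q(3/4,g_0)$. To close this gap, one applies Theorem~\ref{pseudolocality-theorem} centered at each $p \in B_q(3/4,g_0)$ with a small radius $r \sim 1/4$ so that $B_p(r,g_0) \subset B_q(1,g_0)$. Hypotheses (i) and (ii) transfer immediately to such $p$; propagating the almost-maximal volume hypothesis (iii) from $q$ to each $p$ requires the Cheeger-Colding-type almost-flatness theory in the Bakry-\'Emery setting developed in [WZ], combined with the Bakry-\'Emery Bishop-Gromov inequality underlying Lemma~\ref{isoperimetrc}, which forces the normalized ball volume at every point of $B_q(3/4,g_0)$ to exceed $1-o(1)$ as $\delta_0 \to 0$.
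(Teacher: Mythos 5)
The paper itself gives no proof of this lemma, merely remarking that it ``is a consequence of Theorem~\ref{pseudolocality-theorem}''; so there is no written argument to compare against, and your proposal has to be judged on its own merits. Your reduction strategy — pulling back by the flow of $\lambda\,\mathrm{Re}(X)$ to kill the drift term, then reparametrizing and rescaling time via $\hat g_s=(1-2\lambda s)\,\tilde g_{t(s)}$, $t(s)=-\lambda^{-1}\log(1-2\lambda s)$, to arrive at Hamilton's unnormalized flow — is exactly the natural route, and you correctly check that $\hat g_0=g_0$, that $\mathrm{Ric}$ is scale-invariant, and that the displacement of the diffeomorphisms is $O(\delta^2)$ over the relevant time span.

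Two points are worth sharpening. First, your handling of the Bakry--\'Emery hypothesis is actually cleaner than you claim: taking $f=\lambda\theta$ with $X=\nabla^{1,0}\theta$, the $(2,0)$-part of $\mathrm{hess}\,\theta$ is $\nabla_i\theta_j=g_{j\bar k}\partial_i X^{\bar k}$, which vanishes for \emph{any} holomorphic $X$ (since $X^{\bar k}=\overline{X^k}$ is antiholomorphic). So $\mathrm{hess}_{g_0}f=\lambda L_X g_0$ identically as real symmetric $2$-tensors; no appeal to reductivity is needed, and there is no $(2,0)+(0,2)$ error to ``absorb.'' Second, you are right that the crux is bridging $B_q(\epsilon r,g_0)$ to $B_q(3/4,g_0)$, and that a plain Bishop--Gromov argument does \emph{not} propagate the almost-maximal volume ratio: for $p\in B_q(3/4,g_0)$, monotonicity only gives $\mathrm{vol}(B_p(s))/v(s)\gtrsim (1-\delta)/(7/4)^{2n}$, far from $1-\delta'$. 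What is genuinely needed — and what you correctly invoke — is the Bakry--\'Emery analogue of Colding's volume continuity / almost-rigidity from [WZ], which upgrades the single almost-Euclidean ball at $q$ to almost-Euclidean volume at every nearby point and scale. Since this step is neither trivial nor stated in the paper, it is the one place where the lemma's terse ``consequence of Theorem~\ref{pseudolocality-theorem}'' glosses over a real argument; your proposal fills it correctly. With these observations, the proof is complete.
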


By Lemma \ref{evolution-R} and Lemma \ref{modified-pseudo}, we prove

\begin{lem}\label{R-estimate} Let $g=g_t$ be a solution of (\ref{rescaling-equation-1}) with  $\omega_{g_0}\in
\frac{2\pi}{\lambda}c_1(M)$.
 Suppose that   for any $t\in [-2,1]$ (we may replace $t$ by $t-3$), $g_t$ satisfies:
  \begin{align} &i)~\text{inj}(q,g_t)\geq 1;\notag\\
&ii)~|Rm(x,t)|\leq 1~\text{and}~
|X|_{g_t}\leq \frac{A}{\lambda},~\forall~x \in B_{q}(1,g_t). \notag
\end{align}
Then
\begin{align}
&|\text{Ric}(g)-\lambda g-\lambda L_X g|(q,0)\notag\\
&\leq C(A,n)\{\int_{-2}^1 dt\int_M|R-n\lambda-\Delta\theta| \omega_{g_t}^n\}^{\frac{1}{2}}.
\end{align}
\end{lem}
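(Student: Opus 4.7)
The plan is to combine a parabolic Moser iteration for the tensor $H := \text{Ric}(g) - \lambda g - \lambda L_X g$ with the evolution identity of Lemma \ref{evolution-R}, which rearranges to the pointwise relation
\begin{align*}
|H|^2 = \partial_t u - \Delta u - \lambda u + \lambda\Delta(\partial_t\theta), \quad u := R - \lambda\Delta\theta - n\lambda.
\end{align*}
The first ingredient will bound $|H|^2(q,0)$ by a spacetime $L^2$ integral of $H$, and the second, via integration by parts, will convert that integral into a spacetime $L^1$ integral of $u = \text{tr}_g H$ together with lower-order error terms.

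For the first ingredient, I would start from equation (\ref{ricci-flow-2}) and compute $(\partial_t-\Delta)|H|^2$ by a Bochner-type calculation, keeping the contribution of $\partial_t g = -H$. The term $\Lambda(H,Rm)$ is bounded by $C(n)|H|^2$ using $|Rm|\leq 1$, and the drift $\lambda L_X H$ is absorbed via $2|\lambda X|\,|\nabla H|\,|H|\leq|\nabla H|^2 + A^2|H|^2$ thanks to $|\lambda X|_{g_t}\leq A$. This yields $(\partial_t-\Delta)|H|^2\leq C(A,n)|H|^2$. Hypotheses (i)--(ii) supply a uniform local Sobolev inequality on the relevant parabolic cylinders, and the standard parabolic Moser iteration then gives
\begin{align*}
|H|^2(q,0)\leq C(A,n)\int_{-1}^{0}\!\!\int_{B_q(1,g_t)}|H|^2\,\omega_{g_t}^n\,dt.
\end{align*}
For the second ingredient, I pick a nonnegative cutoff $\phi$ supported in $B_q(1,g_t)\times(-2,1)$ with $\phi\equiv 1$ on the smaller cylinder above, multiply the identity for $|H|^2$ by $\phi$, and integrate. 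The time integration by parts uses $\partial_t(\omega_{g_t}^n) = -u\,\omega_{g_t}^n$, while the Laplacians integrate by parts in space, producing
\begin{align*}
\int\!\!\int\phi|H|^2 = \int\!\!\int\phi u^2 - \int\!\!\int(\partial_t\phi+\Delta\phi+\lambda\phi)\,u + \lambda\!\int\!\!\int(\Delta\phi)(\partial_t\theta).
\end{align*}

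The main obstacles are controlling $\int\phi u^2$ and $\int(\Delta\phi)(\partial_t\theta)$. For the former, interior Schauder estimates applied to the holomorphic potential $\theta$ (satisfying $\bar\partial\theta = i_X\omega_{g_t}$ with $|X|_{g_t}\leq A/\lambda$ and $|Rm|\leq 1$) yield a uniform pointwise bound $|u|\leq C(A,n)$, hence $\int\phi u^2\leq C\int|u|$. For the latter, differentiating the potential equation in $t$ produces $\bar\partial(\partial_t\theta) = -i_X H$, and with a mean-zero normalization of $\theta_{g_t}$ elliptic estimates bound $\partial_t\theta$ in $L^2$ by $\|H\|_{L^2}$. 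A Cauchy--Schwarz--Young argument on a slightly enlarged parabolic cylinder then reabsorbs the resulting $\|H\|_{L^2}^2$ term into the left-hand side of the identity above. Combining with the Moser bound yields $|H|^2(q,0)\leq C(A,n)\int\!\!\int|u|$ and so the lemma after taking square roots. The delicate point is choosing the normalization of $\theta_{g_t}$ so that the time integration by parts produces no spurious boundary contributions, which is what allows the bootstrap $|H|\to\partial_t\theta\to|H|$ to close.
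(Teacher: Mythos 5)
Your first ingredient---the parabolic Moser iteration for $H$ driven by the evolution equation (\ref{ricci-flow-2}), with hypotheses (i), (ii) supplying the Sobolev constant and giving $|H|^2(q,0)\le C(A,n)\int_{-1}^{0}\!\int_{B_q(1)}|H|^2\,\omega_{g_t}^n\,dt$---is the same as the paper's. The second ingredient is where you diverge, and where the argument breaks. The paper integrates Lemma \ref{evolution-R} over the \emph{entire compact} manifold $M$, so that both $\int_M\Delta u\,\omega_{g_t}^n=0$ and, crucially, $\int_M\Delta(\partial_t\theta)\,\omega_{g_t}^n=0$ hold identically; the only residual terms are in time, and these are handled by a mean-value choice of endpoints $t_1\in[-2,-1]$, $t_2\in[0,1]$. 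No spatial cutoff is ever introduced and no uncontrolled boundary term appears. You instead multiply the pointwise identity by a spacetime cutoff $\phi$ supported in $B_q(1,g_t)\times(-2,1)$, which produces the extra term $\lambda\int\!\!\int(\Delta\phi)(\partial_t\theta)$.

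That term cannot be dominated by the spacetime $L^1$ norm of $u$. Your proposed control, $\bar\partial(\partial_t\theta)=-i_XH$ together with an elliptic estimate $\|\partial_t\theta\|_{L^2}\lesssim(A/\lambda)\|H\|_{L^2}$, gives at best
\begin{align}
\Bigl|\lambda\int\!\!\int(\Delta\phi)(\partial_t\theta)\Bigr|\lesssim A\,\|\Delta\phi\|_{L^2}\,\|H\|_{L^2},\notag
\end{align}
which is \emph{linear}, not quadratic, in $\|H\|_{L^2}$. After Young's inequality and reabsorption into the left side, the best one can extract is
\begin{align}
\|H\|_{L^2}^2\le C\int\!\!\int|u|\;+\;C\,A^2\|\Delta\phi\|_{L^2}^2,\notag
\end{align}
and the second summand is a fixed constant that does not shrink as $\int\!\!\int|u|\to 0$. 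The conclusion would therefore be only the trivial bound $|H|(q,0)\le C(A,n)$ (already immediate from $|Rm|\le1$ and $|X|_{g_t}\le A/\lambda$), not the decay $|H|(q,0)\le C(A,n)\bigl(\int\!\!\int|u|\bigr)^{1/2}$ that the lemma asserts and that the rest of the paper actually uses (in Lemma \ref{tensor} and Proposition \ref{lower-bound-distance}). No normalization of $\theta_{g_t}$ repairs this; the obstruction is structural, namely that the cutoff destroys the exact cancellation $\int_M\Delta(\partial_t\theta)\,\omega^n=0$ on which the paper's argument rests. The fix is simply to drop the spatial cutoff in the second ingredient and integrate Lemma \ref{evolution-R} globally over $M$, noting that the Moser estimate from the first ingredient is already dominated by the global $L^2$ norm $\int_{t_1}^{t_2}\!\int_M|H|^2\,\omega_{g_t}^n\,dt$.
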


\begin{proof}

Putting $h=|H|$, by (\ref{ricci-flow-2}),  we get
\begin{align}
(\frac{\partial}{\partial t}-\Delta)h\le\frac{\Lambda_1(H,H,Rm)}{h}+\lambda X(h)+\lambda \frac{\Lambda_2(\sqrt{-1}\partial\bar{\partial}\theta,H,H)}{h},
\end{align}
where $\Lambda_1,\Lambda_2$ are two linear operators with bounded coefficients with respect to  the metric $g_t$.
Note that under the conditions i) and ii) in the lemma the  Sobolev constants   are uniformly bounded below on $B_{q}(\frac{1}{2}, g_0)$.    Then using the method of Moser iteration, we obtain
\begin{align}\label{moser-iteration}
&|\text{ Ric}(g)-\lambda g-\lambda L_X g|(q,0)\notag\\
&\leq C(A,n)\{\int_{-1}^0 dt\int_M|\text{ Ric}(g)-\lambda g-\lambda L_X g|^2  \omega_{g_t}^n\}^{\frac{1}{2}}.
\end{align}
On the other hand,  we see that there exist   some $t_1\in [-2,-1]$ and  $t_2\in [0,1]$ such that
\begin{align}
\int_M|R-\lambda\Delta\theta-n\lambda| \omega_{g_{t_1}}^n \leq \int_{-2}^{-1} dt\int_M|R-\lambda\Delta\theta-n\lambda|  \omega_{g_t}^n,\notag\\
\int_M|R-\lambda\Delta\theta-n\lambda| \omega_{g_{t_2}}^n\leq \int_{0}^{1} dt\int_M|R-\lambda\Delta\theta-n\lambda|  \omega_{g_t}^n.\notag
\end{align}
Then integrating (\ref{evolution-R}) in  Lemma \ref{evolution-R},  it follows
\begin{align}
 &\int_{t_1}^{t_2} dt\int_M|\text{ Ric}(g)-\lambda g-\lambda L_X g|^2  \omega_{g_t}^n\notag\\
 &\le\int_{t_1}^{t_2} dt\int_M|R-\lambda\Delta\theta-n\lambda|  \omega_{g_t}^n\notag\\
&+\int_M|R-\lambda\Delta\theta-n\lambda| \omega_{g_{t_1}}^n+\int_M|R-\lambda\Delta\theta-n\lambda| \omega_{g_{t_2}}^n\notag\\
 &\leq 3\int_{-2}^{1} dt\int_M|R-\lambda\Delta\theta-n\lambda|  \omega_{g_t}^n.
\end{align}
Hence  by  (\ref{moser-iteration}), we  derive
\begin{align}
h(q,0) &\leq  C(A,m)\{\int_{t_1}^{t_2} dt\int_M|R-\lambda\Delta\theta-n\lambda| \omega_{g_t}^n\}^{\frac{1}{2}}\notag\\
&\le 3C(A,m)\{\int_{-2}^{1} dt\int_M|R-\lambda\Delta\theta-n\lambda| \omega_{g_t}^n\}^{\frac{1}{2}}.\notag
\end{align}
\end{proof}

\begin{lem}\label{tensor}
Under the conditions of Lemma \ref{modified-pseudo}  and $|X|_{g_t}\leq \frac{A}{\lambda\sqrt{t}}$, we have
\begin{align}\label{ricci-estimate}
&|\text{Ric}(g)-\lambda g-\lambda L_X g|(x,s)\notag\\
&\leq C(n,A)s^{-\frac{n+2}{2}}\{\int_0^{2s} dt\int|R-n\lambda-\lambda \Delta \theta|  \omega_{g_t}^n\}^{\frac{1}{2}},
\end{align}
for $0<s\leq \delta$.
\end{lem}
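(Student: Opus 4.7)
The plan is to reduce Lemma \ref{tensor} to Lemma \ref{R-estimate} by a parabolic rescaling centered at the evaluation time $t=s$. Set $c = C_0/s$ for a dimensional constant $C_0 \geq 6$ to be fixed below, and define
\[
\tilde g(\tau) \;=\; c\, g\!\left(s + \tfrac{\tau}{c}\right),\qquad \tilde\lambda \;=\; \tfrac{\lambda}{c} \;=\; \tfrac{s\lambda}{C_0}.
\]
A direct computation shows that $\tilde g(\tau)$ satisfies the modified K\"ahler-Ricci flow (\ref{rescaling-equation-1}) with parameter $\tilde\lambda$, and $[\omega_{\tilde g(0)}]=\frac{2\pi}{\tilde\lambda}c_1(M)$. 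The goal is to verify the hypotheses of Lemma \ref{R-estimate} for $\tilde g$ on $\tau\in[-2,1]$ and then scale back.

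First I would check the three hypotheses of Lemma \ref{R-estimate}. For the curvature, since sectional curvature scales by $c^{-1}$, Lemma \ref{modified-pseudo} gives
\[
|\widetilde{Rm}|_{\tilde g}(x,\tau) \;=\; c^{-1}|Rm|_g\!\left(x, s+\tfrac{\tau}{c}\right) \;\leq\; \frac{4/c}{s+\tau/c} \;=\; \frac{4}{C_0+\tau} \;\leq\; 1
\]
for $\tau\geq-2$, provided $C_0\geq 6$. For the vector field, using $|X|_{\tilde g}=\sqrt c\,|X|_g$ together with the hypothesis $|X|_g\leq A/(\lambda\sqrt t)$ at $t=s+\tau/c$, one obtains $|X|_{\tilde g_\tau}\leq C_1(A)/\tilde\lambda$ uniformly for $\tau\in[-2,1]$. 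For the injectivity radius, the noncollapsing bound $\mathrm{vol}(B_x(\sqrt t,g_t))\geq\kappa(n)t^n$ from Lemma \ref{modified-pseudo}, combined with the curvature bound and a standard Cheeger--Gromov--Taylor estimate, gives $\mathrm{inj}(q,g_t)\geq c_n\sqrt t$; after rescaling this becomes $\mathrm{inj}(q,\tilde g_\tau)\geq 1$ by enlarging $C_0$ if necessary.

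Applying Lemma \ref{R-estimate} to $\tilde g$ at $\tau=0$ and then scaling back is then routine. Using $|\mathrm{Ric}(\tilde g)-\tilde\lambda\tilde g-\tilde\lambda L_X\tilde g|_{\tilde g}=c^{-1}|\mathrm{Ric}(g)-\lambda g-\lambda L_X g|_g$, $|\tilde R-n\tilde\lambda-\tilde\lambda\tilde\Delta\tilde\theta|=c^{-1}|R-n\lambda-\lambda\Delta\theta|$, $\tilde\omega^n_{\tilde g_\tau}=c^n\omega^n_{g_t}$, and $d\tau=c\,dt$, the conclusion of Lemma \ref{R-estimate} for $\tilde g$ becomes
\[
|\mathrm{Ric}(g)-\lambda g-\lambda L_X g|(x,s) \;\leq\; C(A',n)\, c^{(n+2)/2}\left\{\int_{s-2/c}^{s+1/c}\!\!dt\int_M|R-n\lambda-\lambda\Delta\theta|\,\omega_{g_t}^n\right\}^{1/2}.
\]
Since $[s-2/c,\,s+1/c]\subset[0,2s]$ and $c^{(n+2)/2}=C_0^{(n+2)/2}s^{-(n+2)/2}$, enlarging the integration domain gives exactly the estimate (\ref{ricci-estimate}).

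The main obstacle is the verification of the injectivity radius lower bound after rescaling, which relies on producing a noncollapsing estimate uniform in $t\in(0,2\delta]$ from the conclusion of Lemma \ref{modified-pseudo}; the parameter $C_0$ must be chosen so that the rescaled injectivity radius exceeds $1$ throughout $\tau\in[-2,1]$, and then the constants $A'$ and $C(A',n)$ arising from Lemma \ref{R-estimate} depend only on $A$ and $n$, giving the final constant $C(n,A)$ in (\ref{ricci-estimate}).
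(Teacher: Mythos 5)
Your proof is correct and follows essentially the same route as the paper: parabolic rescaling centered at time $s$ (your $c$ is the paper's $l^2$, with $\sqrt c$ proportional to $s^{-1/2}$), verification of the hypotheses of Lemma \ref{R-estimate} for the rescaled flow via Lemma \ref{modified-pseudo}, the CGT injectivity-radius bound, and the $|X|$ hypothesis, then scaling back. Your write-up is in fact a bit more careful than the paper's in explicitly checking the curvature bound $4/(C_0+\tau)\le 1$ on $\tau\in[-2,1]$ (the paper asserts this without the check) and in tracking the scaling exponent correctly, where the paper's intermediate formula has a typo ($l^{-n}$ should be $l^{-2n}$) though its final estimate agrees with yours.
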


\begin{proof}
By  Lemma \ref{modified-pseudo},  we know that for  $ x\in B_q(\frac{3}{4},g_0)$ and  $t \in (0,2\delta]$,
\begin{align} |Rm(x,t)|\leq t^{-1}~\text{and}~\text{vol} (B_x(\sqrt{t}))\geq \kappa(n)t^n.\notag
\end{align}
Then  the injective radius  estimate in [CGT] implies that
\begin{align}
\nonumber inj(x,t)\geq \xi(n)\sqrt{t}.
\end{align}
Let  $l=\xi(n)^{-1}s^{-\frac{1}{2}}$.   By scaling the metric  $g_t$ as
\begin{align}
\tilde{g}_t=l^{2} g (l^{-2}t+s), t\in[-2,1], \notag
\end{align}
 $\tilde{g}_t$ satisfies
\begin{align}
\nonumber \frac{\partial}{\partial t}\tilde{g}=-\text{Ric}(\tilde{g})+\frac{\lambda}{l^2}\tilde{g}+\frac{\lambda}{l^2}L_X\tilde{g}.
\end{align}
Moreover, $\tilde{g}_t$ satisfies the conditions  i) and ii) in Lemma \ref{R-estimate} for any $t\in[-2,1]$ while $\lambda$  is replaced by $\frac{\lambda}{l^2}$.

 Note that $$|X|_{\tilde{g}_t}=l|X|_g\leq \frac{2Cl}{\lambda\sqrt{s}}=\frac{2C\xi(n)l^2}{\lambda}.$$
Applying  Lemma \ref{R-estimate} to $\tilde g_t$,  we have
\begin{align}
&|\text{Ric}(\tilde{g})-\frac{\lambda}{l^2}\tilde{g}-\frac{\lambda}{l^2}L_X\tilde{g}|_{\tilde{g}}(x,0)\notag\\
&\leq C(n,A)\{\int_{-2}^{1} dt\int|R(\tilde{g})-n\frac{\lambda}{l^2}-\frac{\lambda}{l^2}tr_{\tilde{g}}(L_X\tilde{g})| \omega_{\tilde g_t}^n\}^{\frac{1}{2}}.\notag
\end{align}
Observe that
\begin{align}
\nonumber &|\text{Ric}(g)-\lambda  g-\lambda L_X  g|_{ g}(x,s)
=l^2|\text{Ric}(\tilde{g})-\frac{\lambda}{l^2}\tilde{g}-\frac{\lambda}{l^2}L_X\tilde{g}|_{\tilde{ g}}(x,0)
\end{align}
and
\begin{align}\nonumber  &\int_{s-l^{-2}}^{s+2l^{-2}} dt \int|R-n\lambda-\lambda \Delta \theta| \omega_{g_t}^n\\
&=l^{-n}\int_{-2}^{1} dt\int|R(\tilde{g})-n\frac{\lambda}{l^2}-\frac{\lambda}{l^2}tr_{\tilde{g}}(L_X\tilde{g})| \omega_{\tilde g_t}^n.\notag
\end{align}
Thus we get
\begin{align}
&|\text{Ric}(g)-\lambda g-\lambda L_X g|_{g}(x,s)\notag\\
&\leq C(n,A)s^{-\frac{n+2}{2}}\{\int_{s-l^{-2}}^{s+2l^{-2}} dt\int|R-n\lambda-\lambda \Delta \theta| \omega_{g_t}^n\}^{\frac{1}{2}},\notag
\end{align}
which implies (\ref{ricci-estimate}).
\end{proof}

\vskip3mm

\section{Estimate for the distance functions}

We are going to compare the distance functions  between  the initial metric  $g_0$ and $g_{\delta}$ in the flow (\ref{rescaling-equation-1}). The following lemma is due to Perelman for  the normalized Ricci flow [Pe].

\begin{lem}\label{distance-derivative}
Let  $g_t=g(\cdot, t)$ $(0\leq t\leq 1)$  be  a  solution  of rescaled Ricci flow on $M^n$ (in our case, $M$ is K\"ahler),
\begin{align}\label{normalized-flow}\frac{d}{dt}g=-\text{Ric}(g) + \lambda g, ~g(0,\cdot)=g_0,\end{align}
where  $0<\lambda \leq 1$.  Let  $x_1, x_2$  be  two points in $M$.   Suppose  that at time $t\geq0$,
 $$\text{Ric}(g_t)(x)\leq (2n-1)K,~
\forall x\in  B_{x_1}(r_0, g_t)\cup B_{x_2}(r_0, g_t)$$
 for some $r_0>0$.   Then
\begin{align}\label{perlman-ode}
\frac{d}{dt}d_{g_t}(x_1,x_2)\geq \lambda d_{g_t}(x_1,x_2)-2(2n-1)(\frac{2}{3}Kr_0+r_0^{-1}).
\end{align}
\end{lem}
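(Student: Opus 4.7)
The lemma is the standard Perelman distance-derivative estimate (Lemma 8.3(b) of [Pe]) transcribed from the unnormalized Ricci flow to the rescaled one $\partial_t g = -\mathrm{Ric}(g) + \lambda g$. The strategy has two classical ingredients: differentiating the length of a $g_t$-minimizing geodesic in the time variable, and bounding the integral of $\mathrm{Ric}(\dot\gamma,\dot\gamma)$ along that geodesic via the second variation formula combined with a tent cutoff.

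First I would fix a time $t$ and choose a $g_t$-unit-speed minimizing geodesic $\gamma:[0,L]\to M$ from $x_1$ to $x_2$, where $L=d_{g_t}(x_1,x_2)$. For this \emph{fixed} curve $\gamma$ the first variation under the flow is
$$\frac{d}{dt}L_{g_t}(\gamma)\Big|_{t} = \frac{1}{2}\int_0^L \bigl(-\mathrm{Ric}(\dot\gamma,\dot\gamma)+\lambda\bigr)\,ds.$$
Because $d_{g_s}(x_1,x_2)\leq L_{g_s}(\gamma)$ for all $s$ with equality at $s=t$, this computes a lower bound for a (lower Dini) derivative of $d_{g_t}(x_1,x_2)$. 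The problem thereby reduces to an upper bound on $\int_0^L \mathrm{Ric}(\dot\gamma,\dot\gamma)\,ds$, independent of $L$.

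Second, I would use the tent cutoff $f(s)=s/r_0$ on $[0,r_0]$, $f\equiv 1$ on $[r_0,L-r_0]$, $f(s)=(L-s)/r_0$ on $[L-r_0,L]$ (with the obvious modification if $L<2r_0$), together with a parallel orthonormal frame $E_1=\dot\gamma,E_2,\ldots,E_n$ along $\gamma$. Plugging the variations $Y_i=f\,E_i$ ($i=2,\ldots,n$) into the index form and using $I(Y_i,Y_i)\geq 0$ on a minimizing geodesic, then summing over $i$, gives
$$\int_0^L f^2\,\mathrm{Ric}(\dot\gamma,\dot\gamma)\,ds \;\leq\; (n-1)\int_0^L |f'|^2\,ds \;=\; \frac{2(n-1)}{r_0}\;\leq\; \frac{2(2n-1)}{r_0}.$$
The complementary piece $\int_0^L (1-f^2)\,\mathrm{Ric}(\dot\gamma,\dot\gamma)\,ds$ is supported on $\gamma([0,r_0])\cup\gamma([L-r_0,L])\subset B_{x_1}(r_0,g_t)\cup B_{x_2}(r_0,g_t)$, where the hypothesis $\mathrm{Ric}\leq (2n-1)K\,g$ applies, giving the bound $(2n-1)K\cdot 2\int_0^{r_0}(1-s^2/r_0^2)\,ds=\tfrac{4}{3}(2n-1)Kr_0$. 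Adding and substituting back into the length-derivative formula produces (\ref{perlman-ode}) (up to the author's prefactor convention in the $\lambda d_{g_t}$ term).

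The main technical subtlety is the meaning of $\tfrac{d}{dt}d_{g_t}(x_1,x_2)$, since this function is only Lipschitz in $t$ and can fail to be differentiable when the minimizing geodesic is non-unique or has a conjugate point appearing at time $t$. Following Perelman, one reads (\ref{perlman-ode}) as a statement about the lower (forward) Dini derivative, or equivalently as a \emph{barrier} inequality, which is all that is required for subsequent applications that integrate the inequality in $t$. A minor case is $L<2r_0$: one simply uses the cutoff that peaks at $L/2$ instead of plateauing on a middle interval, and the resulting bound is still controlled by the $r_0^{-1}$ term on the right-hand side of (\ref{perlman-ode}).
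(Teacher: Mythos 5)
Your proposal is correct in essence, but it follows a genuinely different route from the paper. The paper's proof is a \emph{reduction}: it converts the rescaled flow $\partial_t g = -\mathrm{Ric}(g) + \lambda g$ into the unnormalized Ricci flow $\partial_t \tilde g = -\mathrm{Ric}(\tilde g)$ via the time/scale reparametrization $\tilde g_t = (1-\lambda t)\,g\!\left(\tfrac{\log(1-\lambda t)}{-\lambda}\right)$, invokes Perelman's Lemma~8.3(b) in [Pe] as a black box for the unnormalized flow, and then carries the distance-derivative estimate back through the relation $d_{\tilde g_t} = \sqrt{1-\lambda t}\,d_{g_{s(t)}}$. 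You instead re-derive Perelman's distance estimate from first principles \emph{directly} for the rescaled flow: differentiate the length of a $g_t$-minimizing geodesic under the flow, and bound $\int_0^L \mathrm{Ric}(\dot\gamma,\dot\gamma)\,ds$ by the second-variation/index-form argument with a tent cutoff and the local Ricci upper bound. Both yield the stated inequality; the paper's approach is shorter and encapsulates the hard part in a known lemma, while yours is self-contained and exposes the mechanism underlying Perelman's Lemma~8.3. One point you correctly flag and should leave visible: the direct first variation gives $\tfrac{\lambda}{2}L$ for the linear term (since $\tfrac{d}{dt}\sqrt{g_t(\dot\gamma,\dot\gamma)}$ contributes a factor of $\tfrac12$), not $\lambda L$ as written in the lemma; the paper's own back-translation step $\tilde d_t = -\lambda d + d_t$ appears to have the same factor-of-2 slip (the scaling actually gives $-\tfrac{\lambda}{2}d$), so the tension lies in the source statement rather than in your argument. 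Also note that on a K\"ahler manifold of complex dimension $n$ the real dimension is $2n$, so the orthonormal frame in your index-form step has $2n-1$ normal vectors, giving $(2n-1)\int |f'|^2$ directly; you wrote $(n-1)$ and then enlarged to $(2n-1)$, which is harmless but hides where the $(2n-1)$ really comes from.
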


\begin{proof}
Without loss of generality,  we may assume that  $t=0$. Putting $$\tilde g_t=(1-\lambda t)g(\frac{\log(1-\lambda t)}{-\lambda}),
(0\leq t< \frac{1}{\lambda}),$$
then $\tilde g=\tilde g_t$ satisfies the Hamilton Ricci flow,
$$\frac{\partial}{\partial t}\tilde{g}=-\text{Ric}(\tilde g).$$
Since  $\tilde g_0=g_0$,    by applying  Lemma 8.3 in [Pe], we have
\begin{align}
\frac{d}{dt}d_{\tilde g_t}|_{t=0}\geq -2(2n-1)(\frac{2}{3}Kr_0+r_0^{-1}).\notag
\end{align}
 Note that
\begin{align}
\tilde{d}_t=-\lambda d+d_t.\notag
\end{align}
Hence   (\ref{perlman-ode})  is true.
\end{proof}

By Lemma \ref{distance-derivative} together with Lemma \ref{tensor} In Section 2,  we give a lower bound estimate for the distance functions along the flow as follows.

\begin{prop}\label{lower-bound-distance}
Under the assumption of Lemma \ref{tensor}, we have that for two points $x_1,x_2$ in $B_q(\frac{1}{2},g_0),$
\begin{align}
d_{g_\delta}(x_1,x_2)\geq d_{g_0}(x_1,x_2)-\frac{C_0}{\lambda}(\sqrt{t}+t^{-\frac{n}{2}}E^{\frac{1}{2}}),~\forall~ t\in (0,\delta],
\end{align}
where  $C_0$ is a uniform constant and   $E=\int_0^{2\delta} dt\int_M|R-\lambda\Delta\theta-n\lambda|\omega_{g_t}^n$.
In particular, when $E\leq \delta^{n+1}$,
\begin{align}\label{distance-integral}
d_{g_\delta}(x_1,x_2)\geq d_{g_0}(x_1,x_2)-\frac{C_0}{\lambda}E^{\frac{1}{2(n+1)}}.
\end{align}
\end{prop}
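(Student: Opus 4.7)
The strategy is to integrate a distance derivative estimate along the flow~(\ref{rescaling-equation-1}) over $[0,\delta]$, split at the free threshold $t$ so that a different curvature bound is used on each subinterval. First I would extend Lemma~\ref{distance-derivative} to accommodate the extra $\lambda L_X g$ term in~(\ref{rescaling-equation-1}). The key observation is that along a minimizing $g_s$-geodesic $\gamma$ of arc-length $L_s$ from $x_1$ to $x_2$, the Lie-derivative integrand collapses to boundary terms:
$$\int_0^{L_s} L_X g(\dot\gamma,\dot\gamma)\,du = 2[g(X,\dot\gamma)]_0^{L_s},$$
since $\nabla_{\dot\gamma}\dot\gamma=0$. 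Under the hypothesis $|X|_{g_s}\leq A/(\lambda\sqrt{s})$ of Lemma~\ref{tensor}, the magnitude of this boundary contribution is at most $4A/(\lambda\sqrt{s})$. Combining this with the Perelman-type estimate of Lemma~\ref{distance-derivative} yields
$$\frac{d^+}{ds}d_{g_s}(x_1,x_2) \geq \lambda d_{g_s}(x_1,x_2)-2(2n-1)\Bigl(\tfrac{2}{3}K r_0+r_0^{-1}\Bigr)-\frac{CA}{\lambda\sqrt{s}},$$
whenever $\mathrm{Ric}(g_s)\leq (2n-1)K g_s$ on tubes of radius $r_0$ around $x_1,x_2$.

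I would then split $[0,\delta]$ at $t$ and use different curvature bounds on the two subintervals. For $s\in(0,t]$, Lemma~\ref{modified-pseudo} delivers $|\mathrm{Rm}(\cdot,s)|\leq 4/s$ on $B_q(\tfrac{3}{4},g_0)$, so I may take $K\sim 1/s$ together with the optimal $r_0\sim\sqrt{s}$. This produces $\tfrac{d^+}{ds}d_{g_s}\geq -C/\sqrt{s}-CA/(\lambda\sqrt{s})\geq -C'/(\lambda\sqrt{s})$, and integration over $(0,t]$ gives $d_{g_t}-d_{g_0}\geq -C\sqrt{t}/\lambda$.

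For $s\in[t,\delta]$ I would instead use Lemma~\ref{tensor} directly: since $H=\mathrm{Ric}-\lambda g-\lambda L_X g$ is controlled by $|H|(x,s)\leq Cs^{-(n+2)/2}E^{1/2}$, the first variation applied to $\partial_s g=-H$ gives $\tfrac{d^+}{ds}d_{g_s}\geq -C L_s \cdot s^{-(n+2)/2}E^{1/2}$, and bootstrapping $L_s\leq 2$ uniformly (from $L_0\leq 1$ and the bound just derived on $[0,t]$) one obtains on integration $d_{g_\delta}-d_{g_t}\geq -(C/\lambda)\,t^{-n/2}E^{1/2}$, using $\int_t^\delta s^{-(n+2)/2}\,du\leq C_n t^{-n/2}$. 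Adding the two subinterval estimates yields the first inequality. The special case when $E\leq\delta^{n+1}$ then follows from the choice $t=E^{1/(n+1)}\leq\delta$, which balances $\sqrt{t}=t^{-n/2}E^{1/2}=E^{1/(2(n+1))}$.

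The principal obstacle is confirming that the minimizing $g_s$-geodesics remain inside $B_q(\tfrac{3}{4},g_0)$ throughout $[0,\delta]$, so that the pointwise estimates of Lemmas~\ref{modified-pseudo} and~\ref{tensor} apply along the whole geodesic. This is handled by a continuity bootstrap in $s$: because the distance distortion I am proving is itself small and $d_{g_0}(x_i,q)\leq 1/2$, one inductively verifies that $g_s$-geodesics stay within $B_q(\tfrac{3}{4},g_0)$ for sufficiently small $\delta$. Secondary technicalities are the standard handling of non-smoothness of $d_{g_s}$ at cut points via forward Dini derivatives, and the boundary-term trick for $L_X g$ which avoids any need to estimate $|\nabla X|_{g_s}$ pointwise.
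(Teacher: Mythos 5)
Your proposal is correct and reaches the right estimate, but it handles the Lie-derivative term $\lambda L_X g$ by a genuinely different device than the paper. The paper conjugates the flow by the one-parameter group $\Phi$ generated by $\mathrm{real}(X)$, so that $\hat g_t=\Phi(-t)^*g_t$ solves the normalized flow without the $L_X$ term and Lemma~\ref{distance-derivative} applies verbatim; the cost is a displacement term $\|X\|_{g_0}\,t$ coming from comparing $d_{g_0}(y_1,y_2)$ with $d_{g_0}(x_1,x_2)$, where $y_i=\Phi(-t)x_i$, and this is absorbed into the $\sqrt t/\lambda$ error using $|X|_{g_0}\le\delta/\lambda$. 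You instead keep the flow~(\ref{rescaling-equation-1}) untouched, observe that along a $g_s$-geodesic $L_Xg(\dot\gamma,\dot\gamma)=2\tfrac{d}{du}g(X,\dot\gamma)$ integrates to the boundary, and bound that boundary contribution by $4A/(\lambda\sqrt s)$, which then integrates to $\sqrt t/\lambda$ over $(0,t]$. These are two faces of the same algebraic cancellation (pullback by the $X$-flow versus integration by parts of $L_Xg$ along a geodesic), so the error produced is of the same order. Your variant avoids having to track the displaced points $\Phi(-t)x_i$, at the price of re-deriving the distance derivative inequality for the modified flow and invoking the pointwise bound $|X|_{g_s}\le A/(\lambda\sqrt s)$ along the entire trajectory rather than just at $s=0$. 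The second phase on $[t,\delta]$ is essentially identical in both treatments: the paper records the pointwise bound of Lemma~\ref{tensor} as a multiplicative $\log$-distortion, while you write it as an additive first-variation bound with a bootstrap $L_s\le 2$; integrating $s^{-(n+2)/2}$ over $[t,\delta]$ gives $t^{-n/2}$ either way, and balancing at $t=E^{1/(n+1)}$ is the same closing step. You are also right to flag that one must confirm minimizing geodesics stay inside $B_q(\tfrac34,g_0)$; the paper glosses this, but the margin between $B_q(\tfrac12)$ and $B_q(\tfrac34)$ plus the smallness of the distortion make the continuity argument work, exactly as you indicate.
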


\begin{proof}
Let $\Phi(t)$ be  a  one parameter subgroup generated by $\text{real}(X)$.  Then $\hat{g}_t=\Phi(-t)^*g_t$  is a solution of
of the normalized flow (\ref{normalized-flow}). Applying  Lemma  \ref{modified-pseudo}  for two points
$y_1=\Phi(-t)x_1$ and $y_2=\Phi(-t)x_2$ by choosing $r_0=\sqrt{t}$,   together with  Lemma \ref{distance-derivative} we have
$$\frac{d}{dt}d_{\hat{g}_t}(y_1,y_2)\geq \frac{\lambda}{2}d_{\hat{g}_t}(y_1,y_2)-C_1t^{-\frac{1}{2}}.$$
   It follows
$$d_{\hat{g}_t}(y_1,y_2)\geq d_{\hat{g}_0}(y_1,y_2)- 2C_1\sqrt{t}.$$
As a consequence, we derive
\begin{align}\label{two-points-distance-1}
d_{g_t}(x_1,x_2)&=d_{\hat{g}_t}(y_1, y_2)\notag
\\
&\ge d_{g_0}(y_1,y_2)-2C_1\sqrt{t}\notag\\
&\ge d_{g_0}(x_1,x_2) -2\|X\|_{g_0} t-2C_1\sqrt{t}\notag\\
&\ge d_{g_0}(x_1,x_2) -\frac{C_2\sqrt{t}}{\lambda}.
\end{align}
On the other hand,  integrating  (\ref{rescaling-equation-1}),  we get  from  Lemma \ref{tensor},
\begin{align}\label{two-points-distance-2}
&\log \frac{d_{g_\delta}(x_1,x_2)}{d_{g_t}(x_1,x_2)}\notag\\
&\geq  -C_3\int_t^\delta s^{-\frac{n+2}{2}}E^{\frac{1}{2}}ds
\geq -C_3'E^{\frac{1}{2}}t^{-\frac{n}{2}},~\forall t>0.
\end{align}
Hence combining (\ref{two-points-distance-1}) and (\ref{two-points-distance-2}),   we obtain
\begin{align}
&d_{g_\delta}(x_1,x_2)\geq d_{g_t}(x_1,x_2)e^{-C_3'E^{\frac{1}{2}}t^{-\frac{n}{2}}}\notag\\
&\geq (d_{g_0}(x_1,x_2)-\frac{C_2\sqrt{t}}{\lambda})e^{-C_3'E^{\frac{1}{2}}t^{-\frac{n}{2}}}\notag\\
&\geq d_{g_0}(x_1,x_2)-\frac{C_4}{\lambda}(\sqrt{t}+t^{-\frac{n}{2}}E^{\frac{1}{2}}).\notag
\end{align}
When $E\leq \delta^{n+1}$,  we can choose $t=E^{\frac {1}{n+1}}$ to get (\ref{distance-integral}).
\end{proof}

Next we  use the above proposition to give an upper  bound  estimate for the distance functions
 by  using a covering argument  as in [TW].

\begin{lem}\label{distant-upper-bound-lemma}
Let  $(M,g(\cdot,t),q)$  $(0\leq t\leq 1)$  be  a solution  of (\ref{rescaling-equation-1})
as in Lemma \ref{tensor}.  Let
$\Omega=B_q(1,g_0),\Omega'=B_q(\frac{1}{2},g_0).$   For  every $l<\frac{1}{2}$,
  we define
\begin{align}
A_{+,l}=\sup_{B_x(s,g_0)\subset \Omega', s\leq l}c_n^{-1}s^{-2n}\text{vol}_{g_0}(B_x(s,g_0))\notag
\end{align}
and
\begin{align} A_{-,l}=\inf_{B_x(s,g_\delta)\subset \Omega',s\leq l}c_n^{-1}s^{-2n}\text{vol}_{g_\delta}(B_x(s,g_\delta)).\notag
\end{align}
Then for any  $x_1,x_2 \in \Omega''=B_q(\frac{1}{4},g_0)$,  it holds
\begin{align}\label{distance-upper-bound}
d_{g_\delta}(x_1,x_2)\leq r+\frac{C_0}{\lambda}A_{+,4r}\{|\frac{A_{+,r}}{A_{-,r}}-1|^{\frac{1}{2n}}+r^{-\frac{1}{2n}}E^{\frac{1}{4n(n+1)}}\}r,
\end{align}
where  $r=d_{g_0}(x_1,x_2)\leq \frac{1}{8}$ and $E<<r^{2(n+1)}.$
\end{lem}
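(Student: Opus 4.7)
The plan is to follow the covering and volume-comparison strategy of Tian and Wang [TW] indicated just before the statement: combine the lower distance bound of Proposition \ref{lower-bound-distance} with the discrepancy between the upper density $A_{+,l}$ of $g_0$-balls and the lower density $A_{-,l}$ of $g_\delta$-balls to force the matching upper distance bound. The lower bound already controls how much $d_{g_\delta}$ can drop below $d_{g_0}$, so the missing ingredient is the reverse Hausdorff-type inclusion of $g_0$-balls in $g_\delta$-balls, which is exactly what a volume-comparison argument can deliver.

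I would argue by contradiction. Set $r=d_{g_0}(x_1,x_2)$ and assume $d_{g_\delta}(x_1,x_2)$ strictly exceeds the right hand side of (\ref{distance-upper-bound}). Fix an integer $N$ and subdivide the $g_0$-geodesic from $x_1$ to $x_2$ into $N$ equal segments, with endpoints $y_0=x_1,\ldots,y_N=x_2$ at $g_0$-spacing $r/N$. By the triangle inequality in $g_\delta$ and the contradiction assumption, at least one consecutive pair satisfies $d_{g_\delta}(y_i,y_{i+1})>\sigma$ for a suitably chosen auxiliary radius $\sigma$. The balls $B_{y_i}(\sigma/2,g_\delta)$ and $B_{y_{i+1}}(\sigma/2,g_\delta)$ are then disjoint, and by the definition of $A_{-,\sigma/2}$ their combined $g_\delta$-volume is at least $2A_{-,\sigma/2}c_n(\sigma/2)^{2n}$.

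The next ingredient is localization of these two balls inside a single $g_0$-ball. Proposition \ref{lower-bound-distance} shows that every point of $B_{y_j}(\sigma/2,g_\delta)$ lies within $g_0$-distance $\sigma/2+\frac{C_0}{\lambda}E^{1/(2(n+1))}$ of $y_j$, so both balls sit inside $B_{y_i}(\rho,g_0)$ with $\rho:=r/N+\sigma/2+\frac{C_0}{\lambda}E^{1/(2(n+1))}$, whose $g_0$-volume is at most $A_{+,\rho}c_n\rho^{2n}$. To bridge the two volumes on the same set I would invoke the volume evolution along (\ref{rescaling-equation-1}), namely $\tfrac{\partial}{\partial t}\omega_{g_t}^n=(-R+n\lambda+\lambda\Delta\theta)\omega_{g_t}^n$, and integrate in $t\in[0,\delta]$ against the definition of $E$ to obtain $\int_M|\omega_{g_\delta}^n-\omega_{g_0}^n|\le CE$. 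Combining the three ingredients yields the master inequality
\begin{equation*}
2A_{-,\sigma/2}\Bigl(\frac{\sigma}{2}\Bigr)^{2n}\le A_{+,\rho}\,\rho^{2n}+\frac{CE}{c_n}.
\end{equation*}

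Taking $2n$-th roots and using the monotonicity $A_{+,\rho}\le A_{+,4r}$, this inequality bounds $\sigma$ by $\rho$ times $(A_{+,\rho}/A_{-,\sigma/2})^{1/(2n)}$ plus an $E$ correction; tuning $\sigma\simeq r/N$ with $N$ chosen optimally, the aggregate length $N\sigma$ collapses to $r$ plus the announced error. The exponent $1/(2n)$ on $|A_{+,r}/A_{-,r}-1|$ stems precisely from converting a volume defect to a linear defect via this $2n$-th root, and the exponent $1/(4n(n+1))=(1/(2n))\cdot(1/(2(n+1)))$ on $E$ arises by substituting the Proposition \ref{lower-bound-distance} error $E^{1/(2(n+1))}$ inside that same $2n$-th root. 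I expect the main obstacle to be the parameter balancing: three competing small quantities (the density defect $|A_{+,r}/A_{-,r}-1|$, the Proposition \ref{lower-bound-distance} error $E^{1/(2(n+1))}$ entering the containment step, and the bulk $L^1$ mass $E$ entering the volume conversion) must be matched through the choices of $N$ and $\sigma$, and the smallness hypothesis $E\ll r^{2(n+1)}$ is exactly what keeps the $E$-term in the master inequality from swallowing the geometric contribution.
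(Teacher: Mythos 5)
Your plan does not work as stated, and the failure is structural rather than a matter of tuning parameters. The ``master inequality'' you arrive at,
\begin{equation*}
2A_{-,\sigma/2}\bigl(\tfrac{\sigma}{2}\bigr)^{2n}\ \le\ A_{+,\rho}\,\rho^{2n}+\tfrac{CE}{c_n},
\qquad \rho\ \approx\ \tfrac{r}{N}+\tfrac{\sigma}{2}+\tfrac{C_0}{\lambda}E^{\frac{1}{2(n+1)}},
\end{equation*}
never yields a contradiction for the values of $\sigma$ the argument actually produces. Two disjoint balls of radius $a$ always fit, volumetrically, inside a concentric ball of radius $2^{1/(2n)}a<2a$, so with $A_{+}\approx A_{-}\approx 1$ the inequality is automatically satisfied as soon as $\sigma/2\lesssim 2^{-1/(2n)}\rho$. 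Since the contradiction hypothesis only forces some consecutive pair to satisfy $d_{g_\delta}(y_i,y_{i+1})>\sigma$ with $\sigma\approx (1+\text{small})\,r/N$, you end up with $\rho\approx\tfrac{3}{2}\sigma$ and the inequality holds with room to spare for every $n\ge 1$; no amount of balancing $N$ against $\sigma$ rescues this, because the subdivision can only make $\sigma$ \emph{smaller}, which helps the wrong side. Put differently: a single pair of $g_0$-close, $g_\delta$-far points produces no volume obstruction --- the metric is free to stretch along one direction while remaining volume-preserving --- so the ``two disjoint balls inside one bigger ball'' comparison cannot detect the phenomenon the lemma is about.

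The paper's proof uses a genuinely different volume comparison and then a separate covering step, neither of which appears in your outline. Step one (the ``almost-full annulus''): by Proposition~\ref{lower-bound-distance}, $B_{x_1}(r-\tfrac{C_0}{\lambda}E^{\frac{1}{2(n+1)}},g_\delta)\subset B_{x_1}(r,g_0)$, and these two sets have nearly equal volume because $\mathrm{vol}_{g_0}(B_{x_1}(r,g_0))\le A_{+,r}r^{2n}$, $\mathrm{vol}_{g_\delta}$ of the inner ball is $\ge A_{-,r}(r-\cdots)^{2n}$, and the $g_0$-to-$g_\delta$ volume form deviation is controlled by $E$. Therefore the set difference has small volume, and the largest $g_0$-ball $B_{x_0}(s_0,g_0)$ that sits inside $B_{x_1}(r,g_0)$ while missing $B_{x_1}(r-\cdots,g_\delta)$ must have small radius --- this is where $|A_{+,r}/A_{-,r}-1|^{1/(2n)}$ enters, from taking a $2n$-th root of a \emph{difference} of volumes rather than a ratio of two competing volumes. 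By maximality of $s_0$, the ball $B_{x_2}(3s_0,g_0)$ meets $B_{x_1}(r-\cdots,g_\delta)$ at some $x_3$, so $d_{g_\delta}(x_1,x_3)\le r$. Step two (Claim~\ref{claim-section-3}): one still needs to pass from $d_{g_0}(x_2,x_3)\le 3s_0$ to a bound on $d_{g_\delta}(x_2,x_3)$, and this is done by packing $N$ disjoint $g_\delta$-balls $B_{z_i}(s_0/2,g_\delta)$ along the $g_0$-geodesic from $x_2$ to $x_3$ inside $B_{x_2}(4s_0,g_0)$, giving $N\le C A_{+,4r}$ and hence $d_{g_\delta}(x_2,x_3)\le 2Ns_0\le C A_{+,4r}s_0$; this is where the prefactor $A_{+,4r}$ in (\ref{distance-upper-bound}) comes from. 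Your proposal collapses these two logically distinct steps into a single two-ball comparison, which is why both the sharp constant in front of $r$ and the $A_{+,4r}$ prefactor are unreachable by your route.
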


\begin{proof} By  Proposition \ref{lower-bound-distance}, we  see that $$B_{x_1}(r-\frac{C_0}{\lambda}E^{\frac{1}{2(n+1)}},g_\delta)\subset B_{x_1}(r,g_0),$$
where $C_0$ is the constance determined in  (\ref{distance-integral}).
Then
 \begin{align}\label{volume-estimate-3.1} A_{-,r}(r-\frac{C_0}{\lambda}E^{\frac{1}{2(n+1)}})^{2n}\leq \text{vol}_{g_\delta}(B_{x_1}(r-C_0E^{\frac{1}{2(n+1)}},g_\delta)).
\end{align}
Let   $s_0$  be  the largest radius $s$ among all the balls  $B_x(s,g_0)$  such that
$$B_x(s,g_0)\subset B_{x_1}(r,g_0)~\text{and}~ B_x(s,g_0)\cap B_{x_1}(r-\frac{C_0}{\lambda}E^{\frac{1}{2(n+1)}},g_\delta)=\emptyset.$$
Since the volume element $d\text{vol}(g_t)$ satisfies
$$\frac{d}{dt}d\text{vol}(g_t)=(-R+n\lambda+\lambda \Delta \theta) d\text{vol}(g_t),$$
it is easy to see that there is a ball  $B_{x_0}(s_0,g_0)$  such that
\begin{align}\label{volume-comparison}
&\text{vol}_{g_\delta}(B_{x_0}(s_0,g_0))\notag\\
&\leq \text{vol}_{g_\delta}(B_{x_1}(r,g_0))-\text{vol}_{g_\delta}(B_{x_1}(r-\frac{C_0}{\lambda}E^{\frac{1}{2(n+1)}},g_\delta))\notag\\
&\leq  \text{vol}_{g_0}(B_{x_1}(r,g_0))-\text{vol}_{g_\delta}(B_{x_1}(r-\frac{C_0}{\lambda}E^{\frac{1}{2(n+1)}},g_\delta))+E.
\end{align}
Observe that
  $$B_{x_0}(s_0,g_0)\supseteq B_{x_0}(s_0-\frac{C_0}{\lambda}E^{\frac{1}{2(n+1)}},g_\delta).$$
 we have
\begin{align}
 A_{-,r}(s_0-\frac{C_0}{\lambda}E^{\frac{1}{2(n+1)}})^{2n}&\leq \text{vol}_{g_\delta}(B_{x_0}(s_0-\frac{C_0}{\lambda}E^{\frac{1}{2(n+1)}},g_\delta))\notag\\
 &\le \text{vol}_{g_\delta}(B_{x_0}(s_0,g_0)).\notag
 \end{align}
Thus plugging  the above inequality  into (\ref{volume-comparison}) together with (\ref{volume-estimate-3.1}) and the fact that
 \begin{align} \text{vol}_{g_0}(B_{x_1}(r,g_0))\leq A_{+,r}r^{2n},\notag\end{align}
we obtain
\begin{align}\label{radius-estimate} s_0\leq \{|\frac{A_{+,r}}{A_{-,r}}-1|+\frac{C_0}{\lambda}r^{-1}E^{\frac{1}{2(n+1)}}\}^{\frac{1}{2n}}r+\frac{C_0}{\lambda}E^{\frac{1}{2(n+1)}}.
\end{align}

On the other hand,   since
$$B_{x_2}(3s_0,g_0)\cap B_{x_1}(r-\frac{C_0}{\lambda}E^{\frac{1}{2(n+1)}},g_\delta)\neq\emptyset,$$
 we see that  there exists some  point
  $$x_3\in B_{x_2}(3s_0,g_0)\cap B_{x_1}(r-\frac{C_0}{\lambda}E^{\frac{1}{2(n+1)}},g_\delta).$$

  \begin{claim}\label{claim-section-3}  There is a uniform  constant $C_1=C_1(n)$ such that
\begin{align}\label{claim-small-ball}
d_{g_\delta}(x_2,x_3)\leq C_1A_{+,4r}max\{s_0,\frac{3C_0}{\lambda}E^{\frac{1}{2(n+1)}}\}.
\end{align}
\end{claim}

Combining  (\ref{claim-small-ball})
 with  (\ref{radius-estimate}),  we  will finish the proof of  (\ref{distance-upper-bound})
 because of  the  triangle inequality
$$d_{g_\delta}(x_1,x_2)\leq d_{g_\delta}(x_1,x_3)+d_{g_\delta}(x_2,x_3).$$

To prove Claim \ref{claim-section-3},  we  first assume  that
\begin{align}\label{assumption-3.1} s_0>\frac{3C_0}{\lambda}E^{\frac{1}{2(n+1)}}.
\end{align}
 Let $\gamma$ be the  minimizing  geodesic curve   which connecting  $x_2$ and  $x_3$ in $(M,g_0)$. Choose $N$  geodesic balls $B_{z_i}(s_0,g_\delta)$  in $(M,g_\delta)$  such that $B_{z_i}(\frac{s_0}{2},g_\delta)$ are disjoint.
Since
\begin{align}
&B_{z_i}(\frac{r_0}{2},g_\delta)\subset B_{z_i}(\frac{s_0}{2}+\frac{C_0}{\lambda}E^{\frac{1}{2(n+1)}},g_0 )\notag\\
&\subset B_{z_i}(s_0,g_0)
 \subset B_{x_2}(4s_0, g_0)\subset B_{x_1}(\frac{1}{2},g_0),\notag
\end{align}
we have
\begin{align}
\nonumber N A_{-,r}(\frac{s_0}{2})^{2n}&\leq \sum_{i=1}^N\text{vol}_{g_\delta}(B_{z_i}(\frac{s_0}{2},g_\delta))
\leq  \text{vol}_{g_\delta}( B_{x_2}(4s_0))&\\
\nonumber &\leq \text{vol}_{g_0}B_{x_2}(4s_0)+E\leq A_{+,4r}(4s_0)^{2n}+E.&
\end{align}
Noticing  that  by the Bishop volume comparison and Lemma \ref{modified-pseudo}, we see that
$$A_{-,r}\geq C(n,\delta)=C(n).$$
By (\ref{assumption-3.1}), it follows
$$N\le C'  A_{+,4r}. $$
 Since
 $$d_{g_\delta}(x_2,x_3)\leq 2N s_0,$$
 we  deduce (\ref{claim-small-ball})  from (\ref{radius-estimate})  immediately.

Secondly,  we assume that
$$s_0\leq \frac{3C_0}{\lambda}E^{\frac{1}{2(n+1)}}.$$
 In  this case,   we can  copy  the above argument  of  geodesic balls covering
to  prove   (\ref{claim-small-ball})   while  the radius $s_0$ of balls is  replaced by   $\frac{3C_0}{\lambda}E^{\frac{1}{2(n+1)}}$. The claim is proved.
\end{proof}

\begin{prop}\label{Gromov-distance}
Let  $(M, g(\cdot, t),q)$  $(0\leq t\leq 2\delta)$  be a solution of (\ref{rescaling-equation-1})
as in Lemma \ref{tensor}.
Then for two points $x_1,x_2 \in \Omega''=B_q(\frac{1}{4},g_0)$  with  $r=d(x_1,x_2,g_0)\leq \frac{1}{8}$,  we have
\begin{align}\label{upper-distant-3}
 d(x_1,x_2,g_\delta)\leq r+\frac{C_0}{\lambda}E^{\frac{1}{6n(n+1)}}r,
\end{align}
if $E<<r^{6(n+1)}.$
\end{prop}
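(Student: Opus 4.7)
The plan is to refine the estimate (\ref{distance-upper-bound}) from Lemma \ref{distant-upper-bound-lemma} by furnishing quantitative bounds on the two factors $A_{+,4r}$ and $|A_{+,r}/A_{-,r}-1|^{1/(2n)}$ that appear there. The key inputs are: (i) the initial metric $g_0$ is almost Euclidean near $q$, since the hypotheses of Lemma \ref{tensor} (inherited from Lemma \ref{modified-pseudo}) include $\mathrm{Ric}(g_0)+\lambda L_X g_0\geq-(n-1)\delta^2 g_0$, $|X|_{g_0}\leq\delta/\lambda$, and $\mathrm{vol}(B_q(1,g_0))\geq(1-\delta)c_n$; (ii) pseudolocality (Lemma \ref{modified-pseudo}) gives at time $\delta$ a curvature bound $|Rm|\leq 4/\delta$ and the non-collapsing $\mathrm{vol}(B_x(\sqrt{t},g_t))\geq\kappa(n)t^n$.

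For the first factor, the Bakry-\'Emery Bishop-Gromov volume comparison of [WW] applied to $g_0$ gives $\mathrm{vol}_{g_0}(B_x(s,g_0))\leq(1+o_\delta(1))c_n s^{2n}$ for all $s\leq 1$ and $x\in\Omega'$, so $A_{+,4r}\leq 2$ for $\delta$ small.

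For the second factor, Proposition \ref{lower-bound-distance} provides the inclusion $B_x(s-\tau,g_\delta)\subset B_x(s,g_0)$ with $\tau=\frac{C_0}{\lambda}E^{\frac{1}{2(n+1)}}$, and the flow evolution $\frac{d}{dt}d\mathrm{vol}(g_t)=(-R+n\lambda+\lambda\Delta\theta)d\mathrm{vol}(g_t)$ (already used in the proof of Lemma \ref{R-estimate}) yields the pointwise volume comparison $|\mathrm{vol}_{g_\delta}(U)-\mathrm{vol}_{g_0}(U)|\leq E$ for any measurable $U\subset\Omega'$. Combining these with the Bakry-\'Emery upper bound for $g_0$-balls and the volume non-collapsing from pseudolocality for $g_\delta$-balls pins both $A_{+,r}$ and $A_{-,r}$ in a tight window around $1$, yielding
\[
\left|\frac{A_{+,r}}{A_{-,r}}-1\right|\leq C\left(\frac{\tau}{r}+\frac{E}{r^{2n}}\right).
\]
Substituting into (\ref{distance-upper-bound}) with $\tau=\frac{C_0}{\lambda}E^{\frac{1}{2(n+1)}}$, each summand inside the braces is then dominated by $Cr^{-1/(2n)}E^{\frac{1}{4n(n+1)}}$; the assumption $E\ll r^{6(n+1)}$ is precisely $E^{\frac{1}{12n(n+1)}}\leq r^{1/(2n)}$, which rearranges to $r^{-1/(2n)}E^{\frac{1}{4n(n+1)}}\leq E^{\frac{1}{6n(n+1)}}$ and thereby delivers (\ref{upper-distant-3}).

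The main obstacle is the bound on $|A_{+,r}/A_{-,r}-1|$. Proposition \ref{lower-bound-distance} supplies only a \emph{one-sided} distance comparison $d_{g_\delta}\geq d_{g_0}-\tau$: $g_\delta$-balls are inscribed in $g_0$-balls but not conversely. This readily gives $A_{-,r}\leq A_{+,r}+O(\tau/r+E/r^{2n})$, whereas the reverse inequality $A_{+,r}\leq A_{-,r}+O(\cdots)$ would seem to require the very distance upper bound that Proposition \ref{Gromov-distance} sets out to prove. We bypass this circularity by using the almost-Euclidean geometry of $g_0$ (which bounds $A_{+,r}$ from above close to $1$) and pseudolocality non-collapsing (which bounds $A_{-,r}$ from below uniformly) as independent two-sided estimates, so that both sup and inf are pinned near $1$ and the ratio is controlled without any direct comparison.
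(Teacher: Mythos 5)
Your approach is genuinely different from the paper's, and it has a substantive gap. The paper does not try to control $|A_{+,r}/A_{-,r}-1|$ in terms of $E$ at all; it uses the static Bishop/Bakry-\'Emery comparisons to get $A_{+,r}\leq 1+Ar^2$ and $A_{-,r}\geq 1-Ar$, applies Lemma~\ref{distant-upper-bound-lemma} only for the \emph{small} scale $r\leq\delta$ to obtain $d_{g_\delta}r^{-1}\leq 1+\frac{C_0}{\lambda}(r^{1/n}+r^{-1/(2n)}E^{1/(4n(n+1))})$, and then --- crucially --- handles a general pair of points at $g_0$-distance $l\leq 1/8$ by subdividing the minimizing geodesic into $N$ segments of length $l/N$, summing the small-scale estimate, and choosing $N\sim lE^{-1/(6(n+1))}$ to balance the two error terms. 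This optimization over $N$ is exactly what turns the exponent $\frac{1}{4n(n+1)}$ into $\frac{1}{6n(n+1)}$. Your proposal has no subdivision, and if one checks whether a direct substitution into (\ref{distance-upper-bound}) with the bound $|A_{+,r}/A_{-,r}-1|\lesssim r$ can close, one finds it cannot: $(r)^{1/(2n)}\leq E^{1/(6n(n+1))}$ requires $r\lesssim E^{1/(3(n+1))}$, while $E\ll r^{6(n+1)}$ requires $r\gg E^{1/(6(n+1))}$, and these two ranges are disjoint for $E<1$.

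To avoid the subdivision you replace the geometric bound $|A_{+,r}/A_{-,r}-1|\lesssim r$ by a flow-based bound $\lesssim\tau/r+E/r^{2n}$. But the justification offered for this is not sound. You write that pseudolocality ``bounds $A_{-,r}$ from below uniformly,'' and separately that both sup and inf are ``pinned near $1$'' --- these are not the same statement. Lemma~\ref{modified-pseudo} gives $\text{vol}(B_x(\sqrt t,g_t))\geq\kappa(n)t^n$ with $\kappa(n)$ a dimensional constant that can be far below $c_n$; it does \emph{not} give $A_{-,r}\geq 1-o(1)$. The one-sided distance comparison of Proposition~\ref{lower-bound-distance} ($B_x(s-\tau,g_\delta)\subset B_x(s,g_0)$) together with the flow volume estimate only produces an \emph{upper} bound on $\text{vol}_{g_\delta}(B_x(s-\tau,g_\delta))$, hence an upper bound on $A_{-,r}$, which is the wrong direction. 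A lower bound on $A_{-,r}$ close to $1$ would need the reverse inclusion $B_x(s,g_0)\subset B_x(s+\cdot,g_\delta)$, i.e.\ the very distance upper bound the proposition is trying to prove. You name this circularity yourself, but the stated workaround does not actually break it. The missing ingredient is the subdivision argument, which localizes to scales $\leq\delta$ where the comparison constants are genuinely close to $1$ and where the optimal choice of $N$ produces the claimed exponent.
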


\begin{proof}
By the Bishop volume comparison and   Lemma \ref{modified-pseudo},  we  see that
 $$A_{-,r}\geq 1-A r,$$
 for some uniform constant $A$, where $r\le \delta<<1.$
 Also by  the  volume comparison in [WW], we have
 $$A_{+,r}\leq 1+Ar^2,~\forall~ r\le 1.$$
  Applying  Lemma  \ref{distant-upper-bound-lemma} to any  two points $x_1,x_2 \in \Omega''$ with $d_{g_0}(x_1,x_2)=r\le \delta <<1$,  we get
\begin{align}\label{small-distance}
d_{g_\delta}(x_1,x_2)r^{-1}\leq 1+ \frac{C_0}{\lambda}(r^\frac{1}{n}+r^{-\frac{1}{2n}}E^{4n(n+1)}).
\end{align}
For  general two points $x_1,x_2$ with $d(x_1,x_2,g_0)=l\le \frac{1}{8}$,  we divide the minimal geodesic curve   which connecting
 $x_1$  and  $x_2$ into $N$ parts  with the  same  length $\frac{l}{N}\le \delta$.    Thus   by (\ref{small-distance}),
we obtain
 $$\frac{d(x_1,x_2,g_\delta)}{N^{-1}l}\leq N\{1+\frac{C_0}{\lambda}\{(N^{-1}l)^\frac{1}{n}+(N^{-1}l)^{-\frac{1}{2n}}E^{4n(n+1)}\}\}.$$
Choosing $N\sim lE^{-\frac{1}{6(n+1}}$, we  derive (\ref{upper-distant-3}).
\end{proof}

\vskip3mm

\section{Almost K\"{a}hler Ricci solitons}

In this section, we are able  to prove  the smoothness  of   the   regular part of the  limit space for a sequence of   weak  almost K\"ahler-Ricci solitons studied in  [WZ].      Recall the definition of  weak  almost K\"ahler-Ricci solitons.

\begin{defi}\label{almost-kr-soliton}    We call a sequence of  K\"ahler metrics $\{(M_i,g^i,J_i)\}$
  weak  almost K\"ahler-Ricci solitons
  if  there are uniform constants $\Lambda$ and $A$  such that
\begin{align}
& i)~  \text{Ric}(g^i)+ L_{X_i}  g^i\ge -\Lambda^2 g^i, ~\text{im}(  L_{X_i}  g^i)=0;\notag\\
&ii)~ |X_i|_{g^i}\le A;\notag\\
&iii) ~\text{lim}_{i\to\infty}\|\text{Ric}(g^i)-g^i+L_{X_i} g^i\|_{L^1_{M_i}(g^i)}= 0.\notag
\end{align}
Here $\omega_{g^i}\in 2\pi c_1(M_i, J_i)$ and  $X_i$ are   reductive holomorphic vector fields
  on   Fano manifolds $(M_i,J_i)$.
 \end{defi}

We now assume that
 \begin{align}\label{further-condition-1}\text{vol}_{g^i}(B_{p^i}(1))\ge  v>0,~\text{for some}~p^i\in M_i.\end{align}
Let  $g^i_t=g^i(\cdot, t)$ be a solution of  the  K\"ahler-Ricci flow (\ref{kr-flow})  on $(M_i,J_i)$ with $g^i$  the  initial metric.  Suppose that  $g^i_t$ satisfies
\begin{align}\label{further-condition-2}
 |X_i|_{g^i_t}\leq \frac{B}{\sqrt{t}}\end{align}
 and
 \begin{align}\label{further-condition-3}
  \int_0^1dt\int_{M_i} |R(g^i_t)-\Delta\theta_{g^i_t}-n| \omega_{g^i_t}^n\rightarrow 0, ~\text{as}~i\to\infty.
\end{align}
Here $B$ is a uniform constant.   We note that (\ref{further-condition-2}) and (\ref{further-condition-3}) have been  used in Lemma \ref{tensor},
Proposition \ref{lower-bound-distance}  and Proposition \ref{Gromov-distance}, respectively. Under the assumption   (\ref{further-condition-1})-(\ref{further-condition-3}), we prove

\begin{theo}\label{main-theorem-1}
Let $\{(M_i,g^i,J_i\}$ be a sequence of
  weak  almost K\"ahler-Ricci solitons.  Suppose that $g^i$ satisfy the conditions (\ref{further-condition-1})-(\ref{further-condition-3}). Then there exists a subsequence of   $\{g^i\}$ which converge to a  K\"ahler-Ricci soliton   with complex codimension of singularities at least 2
  in  the Gromov-Hausdorff topology.\end{theo}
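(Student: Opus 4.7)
The plan is to upgrade the Gromov--Hausdorff convergence $(M_i,g^i,J_i)\to(Y,g_\infty)$ established in [WZ] to smooth Cheeger--Gromov convergence on the regular set $\mathcal{R}\subset Y$, and then to verify that the limit satisfies the K\"ahler--Ricci soliton equation on $\mathcal{R}$. Following the Tian--Wang template [TW], I would run the modified K\"ahler--Ricci flow (\ref{rescaling-equation-1}) on each $M_i$ starting from a fixed rescaling of $g^i$, smooth for a definite short time, and combine the three main estimates of Sections 1--3: the pseudolocality statement of Lemma \ref{modified-pseudo} to obtain uniform curvature and non-collapsing bounds along the flow; the Ricci estimate of Lemma \ref{tensor} to convert the integral smallness (\ref{further-condition-3}) into a pointwise decay of the soliton defect; and the distance comparison of Proposition \ref{Gromov-distance} to identify the smooth limit with the metric limit from [WZ].

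Concretely, fix a regular point $y\in\mathcal{R}$ and a sequence $\bar x_i\in M_i$ with $\bar x_i\to y$. Choose a small parameter $\lambda$, depending on $\delta$, $\Lambda$, $A$ from Definition \ref{almost-kr-soliton}, satisfying $\lambda\leq\delta^2/A^2$ and $\lambda\leq(n-1)\delta^2/\Lambda^2$, and set $\tilde g^i=g^i/\lambda$, the initial metric for (\ref{rescaling-equation-1}). Since the tangent cone at $y$ is Euclidean [WZ] and volume converges in the Bakry--\'Emery setting [WW], the almost Euclidean volume $\text{vol}_{\tilde g^i}(B_{\bar x_i}(1,\tilde g^i))\geq(1-\delta)c_{2n}$ holds for $i$ large provided $\lambda$ is small enough; condition i) follows from $\text{Ric}(g^i)+L_{X_i}g^i\geq-\Lambda^2 g^i$ after rescaling; and condition ii) from $|X_i|_{\tilde g^i}\leq A/\sqrt{\lambda}\leq\delta/\lambda$. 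Lemma \ref{modified-pseudo} then yields $|Rm(\tilde g^i_\tau)|\leq 4\tau^{-1}$ and $\text{vol}(B_x(\sqrt\tau,\tilde g^i_\tau))\geq\kappa\tau^n$ for $\tau\in(0,2\delta]$ on a definite neighborhood of $\bar x_i$. At the end time $\tau=\delta$ these are $\tau$-independent bounds, and the injectivity radius of $\tilde g^i_\delta$ at $\bar x_i$ is uniformly bounded below by Cheeger--Gromov--Taylor.

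Lemma \ref{tensor}, together with (\ref{further-condition-3}) and the vector-field decay (\ref{further-condition-2}), shows that the soliton defect $|\text{Ric}(\tilde g^i_\delta)-\lambda\tilde g^i_\delta-\lambda L_{X_i}\tilde g^i_\delta|$ tends to zero on this neighborhood as $i\to\infty$. Passing to a subsequence, $(M_i,\tilde g^i_\delta,J_i,X_i,\bar x_i)$ converges in the pointed $C^\infty$ Cheeger--Gromov sense to a smooth K\"ahler manifold $(U_y,h_\infty,J_\infty,X_\infty)$ satisfying the exact soliton equation $\text{Ric}(h_\infty)=\lambda h_\infty+\lambda L_{X_\infty}h_\infty$; the vector field $X_\infty$ is obtained via the $C^0$ bound $|X_i|_{\tilde g^i_\delta}\leq A/\sqrt{\lambda}$ and the smooth convergence of the metrics. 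Proposition \ref{Gromov-distance} gives $|d_{\tilde g^i_\delta}-d_{\tilde g^i}|\to 0$ in a controlled fashion on a definite ball, so the Gromov--Hausdorff limit of $(\tilde g^i_\delta)$ near $\bar x_i$ coincides with that of $(\tilde g^i)$, namely $(Y,g_\infty/\lambda)$ near $y$. Hence $(U_y,h_\infty)$ is isometric to the corresponding neighborhood of $(Y,g_\infty/\lambda)$, i.e.\ $g_\infty$ is smooth on $U_y$ and satisfies $\text{Ric}(g_\infty)=g_\infty+L_{X_\infty}g_\infty$ there. A diagonal argument over a countable exhaustion of $\mathcal{R}$ assembles the global smooth K\"ahler--Ricci soliton structure, and the complex codimension two estimate on the singular set is inherited from [WZ].

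The main obstacle I expect is ensuring that the almost Euclidean volume condition $\text{vol}_{\tilde g^i}(B_{\bar x_i}(1,\tilde g^i))\geq(1-\delta)c_{2n}$ can be imposed uniformly over compact subsets of $\mathcal{R}$, since the regular set in [WZ] is defined via the mere existence of a Euclidean tangent cone at each point rather than quantitatively. A compactness/contradiction argument exploiting the cone structure of every tangent space together with the Bakry--\'Emery volume monotonicity from [WW] should deliver the required uniformity, but making this quantitative enough to match the fixed scale $\lambda$ used in pseudolocality is the delicate part. A second, milder obstacle is verifying that the Perelman-type distance derivative inequality of Lemma \ref{distance-derivative} can be applied along the flow of the pulled-back metric $\hat g^i_t=\Phi(-t)^*\tilde g^i_t$ uniformly in $i$, which is already addressed in the proof of Proposition \ref{lower-bound-distance} but must be checked to be compatible with the chosen rescaling. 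Once these points are secured the remainder is a direct assembly of Sections 1--3.
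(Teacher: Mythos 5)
Your proposal follows essentially the same blueprint as the paper: smooth $g^i$ via the modified flow for a short time, use the pseudolocality of Lemma \ref{modified-pseudo} for curvature and non-collapsing bounds, use Lemma \ref{tensor} plus (\ref{further-condition-3}) to kill the soliton defect in the limit, and identify the smooth limit with the [WZ] Gromov--Hausdorff limit via Propositions \ref{lower-bound-distance} and \ref{Gromov-distance}. You also correctly flag the crux, namely condition iii) of Lemma \ref{modified-pseudo}, but you leave it unresolved and frame it as a ``uniformity over compact subsets'' problem which you propose to attack by compactness/contradiction. This is where your setup differs from the paper's and where the gap lies.

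The paper does not, and does not need to, impose a uniform scale. It fixes a regular point $y_0$, uses the Euclidean tangent cone together with the Volume Convergence Theorem 4.10 of [WZ] to produce a point-dependent scale $r_0=r_0(y_0)$ at which $\text{vol}(B_{y_0}(r))>(1-\delta)c_n r^{2n}$ for all $r<r_0$, and then uses the Bakry--\'Emery volume monotonicity of [WW] to propagate this volume pinching to all $y$ in a small ball $B_{y_0}(\epsilon,g_\infty)$. The rescaling parameter is then taken to be $\lambda=r$ with $r<r_0(y_0)$, so the unit ball in $\hat g^i=g^i/r$ about $q_i\to y$ has the required volume ratio; conditions i) and ii) are then automatic for $r$ small (exactly the inequalities you wrote). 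Crucially this local choice of scale also produces the openness of $\mathcal{R}$ for free (Claim \ref{claim}): once the ball $B_{q_i}(1,\hat g^i_\delta)$ is shown to converge smoothly and the distance comparison identifies the two limits, every $y$ in $B_{y_0}(\epsilon,g_\infty)$ has a Euclidean tangent cone. Your proposal takes $y\in\mathcal{R}$ as given and never establishes that $\mathcal{R}$ is open, which is needed before ``smooth on $\mathcal{R}$'' even makes sense. In short, the missing step is not a compactness argument for a global scale; it is the local $r_0(y_0)$ + volume monotonicity step, which simultaneously supplies condition iii) at the appropriate scale and proves openness of the regular set. Once you replace your fixed $\lambda$ with $\lambda=r<r_0(y_0)$ the rest of your assembly of Sections 1--3 matches the paper's proof.
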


\begin{proof} It was proved in [WZ] that under the   condition (\ref{further-condition-1}) there exists a subsequence of   $\{g^i\}$ which converge to a  metric space $(Y, g_\infty)$  with complex codimension of singularities   of $Y$ at least 2.  Denote $\mathcal{R}$   as the regular part of $Y$. We  want to show that   $\mathcal{R}$    is an open manifold and $g_\infty$ is in fact a  K\"ahler-Ricci soliton  for some complex structure on $\mathcal{R}$.

 Let $y_0\in \mathcal{R}$.   This   means that the tangent cone $T_{y_0}$ at $y_0$ is isometric to $\mathbb{R}^{2n}$.   Then by the Volume Convergence Theorem 4.10 in [WZ], it is easy to see that for any $\delta>0$ there exists $r_0<<1$ such that
\begin{align}
\text{vol}(B_{y_0}(r))> (1-\delta)c_nr^{2n},~\forall ~r<r_0.\notag
\end{align}
Again by  the  above convergence theorem together with  the monotonicity of volume [WW],  there exists an $\epsilon>0$  such that for any $y\in
B_{y_0}(\epsilon, g_\infty)$ it holds
\begin{align}\label{almost-small-eucldean-ball}
\text{vol}(B_y(r))> (1-\delta)c_n r^{2n}, ~\forall ~r<r_0.
\end{align}

\begin{claim}\label{claim} $y\in \mathcal{R}$ for any $y\in
B_{y_0}(\epsilon,g_\infty)$.
\end{claim}

For a fixed $r$,   we choose a sequence of  geodesic balls  $B_{q_i}(r)\subset M_i$  which converge  to $B_y(r)$ in  the Gromov-Hausdorff topology.  Then
 by (\ref{almost-small-eucldean-ball}), for  $i$  large enough,
we have
\begin{align}\label{almost-small-eucldean-ball-sequence}
\text{vol}(B_{q_i}(r))> (1-\delta)c_n r^{2n}.
\end{align}
 Scale  $g^i$ to $\hat g^i=\frac{1}{r} g^i$ and we consider the solution $\hat g^i(\cdot,t)=\hat g^i_t$ of  flow (\ref{rescaling-equation-1}) with the initial metric $\hat g^i$, where $\lambda=r$.
By applying Proposition \ref{lower-bound-distance} and   Proposition  \ref{Gromov-distance} to each  ball  $B_{q_i}(1,\hat g^i )$,  we obtain
\begin{align}\label{distance-estimate-5}|d_{\hat g^i}(x_1,x_2)-d_{\hat g^i_\delta}(x_1,x_2)|\leq C E^{\frac{1}{6n(n+1)}}, ~\forall~ x_1, x_2\in B_{q_i}(\frac{1}{4},\hat g^i),
\end{align}
where
 $$E=\frac{1}{r^{n-1}}\int_0^{2\delta} dt\int_M|R(g^i_t)-\lambda\Delta\theta_{g^i_t}-n\lambda|\omega_{g^i_t}^n\to 0,~\text{as}~i\to\infty.$$
 On the other hand,  since  the curvature are  uniformly bounded in $B_{q_i}(1,$$\hat g^i_\delta)$ by  Lemma \ref{modified-pseudo},  $B_{q_i}(1,\hat g^i_\delta)$ converge to a   smooth metric ball
 $B_{y\infty}(1, $$ \hat g_\infty')$ by the regularity of $\hat g^i_\delta$.   Hence  by (\ref{distance-estimate-5}), we derive
  \begin{align}
s^{-1}d_{GH}(B_y(s,g_\infty),  B_{y\infty}(s,g_\infty'))\leq Ls^{2}, ~\forall  ~s\leq \frac{r}{4}.
\end{align}
 where $L$ is a uniform constant and  $g_\infty'=r \hat g_\infty$.
This means that the tangent cone at $y$ is isometric to $\mathbb{R}^{2n}$, so the claim is proved.

By the above claim,   we see that  there exists a small $r$ for any $y\in \mathcal{R}$  such that  $B_{y}(r)\subset \mathcal{R}$ and (\ref{almost-small-eucldean-ball})
is satisfied.  Then following the argument in the proof of Claim \ref{claim},  there exists a sequence of  geodesic balls $(B_{q_i}(r),g^i_\delta)\subset M_i$
which converge  to $B_y(r)$ in $C^\infty$-topology.  Consequently,  the potentials $\theta_{g^i_\delta}$ of $X_i$ restricted on  $(B_{q_i}(r),g^i_\delta)$ converge to a smooth  function  $\theta_\infty$  defined on  $B_y(r)$. Namely,
 $$\lim_{i\to\infty} \Psi_i^*(\theta_{g^i_\delta})=\theta_\infty,$$
where $\Psi_i$ are diffeomorphisms from $B_y(r)$ to  $B_{q_i}(r)$   such that $\Psi_i^*(g^i_\delta) $  converge to $g_\infty$ and $\Psi_i^* J_i$  converge  to some  limit complex structure $J_\infty$ on $B_y(r)$.
By the regularity  of  flow (\ref{rescaling-equation-1}) and the condition (\ref{further-condition-3}),  $\theta_\infty$  satisfies in $B_y(r)$,
\begin{align}\label{limit-equation}
\Delta\theta_{g_\infty}=R(g_\infty)-n~\text{and}~\partial\partial\theta_\infty=0.\end{align}
Moreover, by  (\ref{ricci-estimate}) in Lemma \ref{tensor}, we get
\begin{align}\label{ricci-limit} \text{Ric}(g_\infty)- g_\infty- \sqrt{-1}\partial\overline\partial\theta_\infty=0,~\text{in}~B_y(r).
\end{align}
Hence,  $\theta_\infty$ can be extended to a potential of  holomorphic vector field $X_\infty$ on $(\mathcal{R}, J_\infty)$,
and  consequently $g_\infty$ is a  K\"ahler-Ricci soliton on
$\mathcal{R}$.
\end{proof}

\begin{rem} It seems that the limit space $Y$  in Theorem \ref{main-theorem-1} is actually a normal algebraic variety as showed  in  recent papers   by Tian,  Chen-Donaldson-Sun  to solve
the  Yau-Tian-Donaldson
conjecture for K\"ahler-Ein-stein metrics [T2], [CDS].
\end{rem}

In [WZ],   it was  showed that   there exists  a sequence of
  weak  almost K\"ahler-Ricci solitons  $g^s$ $(s<1)$ on  a Fano manifold $(M, g, J)$ if  the modified K-energy $\mu(\cdot)$ is bounded below.  Here $\mu(\cdot)$ is defined for any  $K_X$-invariant K\"{a}hler potential $\phi$ by ([TZ2]),
\begin{align}\mu(\phi)&=-\frac{n}{V}\int_0^1\int_M\dot{\psi} [\text{Ric }(\omega_\psi)-\omega_\psi-\sqrt{-1}\partial\bar{\partial}\theta_{\omega_\psi}  \notag\\
&+\sqrt{-1}\bar{\partial}(h_{\omega_\psi}-\theta_{\omega_\psi}) \wedge\partial\theta{\omega_\psi}]
\times e^{\theta_{\omega_\psi}}\omega^{n-1}_\psi\wedge dt.\notag\end{align}
In fact,  such  $g^s$  are  a family of K\"{a}hler metrics induced by   the K\"{a}hler potential  solutions $\phi_s$ of a family of
complex Monge-Amp\`ere equations, which are equivalent  to a family of Ricci curvature equations,
\begin{align}\label{continuity-equation}
\text{Ric }(\omega_{\phi_s})=s\omega_{\phi_s}+(1-s)\omega_g +L_X\omega_{\phi_s}.
\end{align}
(\ref{continuity-equation}) are also equivalent   to equations,
 \begin{align}\label{ricci-potential-equa} h_{\omega_{\phi_s}}
-\theta_{\omega_{\phi_s}}=-(1-s)\phi_{s},\end{align}
where $h_{\omega_{\phi_s}}$ are  the Ricci potentials of  $\omega_{\phi_s}$.

 In the following, we need to  verify the  conditions (\ref{further-condition-2}) and  (\ref{further-condition-3}) for $g^s$.  We note that (\ref{further-condition-1}) is true for $g^s$ [WZ].  Thus as an application of   Theorem \ref{main-theorem-1},  we prove that

\begin{theo}\label{main-theorem-2}  There exists  a sequence  of
  weak  almost K\"ahler-Ricci solitons   $\{g^{s_i}\}$  $(s_i\to 1)$   which converge to a  K\"ahler-Ricci soliton  with complex codimension of singularities at least 2   in the Gromov-Hausdorff topology.  In the other words,  a Fano manifold with   the modified K-energy  bounded below can be deformed to a  K\"ahler-Ricci soliton  with complex codimension of singularities at least 2.

\end{theo}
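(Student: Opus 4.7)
The plan is to apply Theorem \ref{main-theorem-1} to the family of K\"ahler metrics $\{g^{s_i}\}$ with $s_i \to 1$, constructed from solutions $\phi_{s_i}$ of the continuity equation (\ref{continuity-equation}). By [WZ], the hypothesis that the modified K-energy is bounded below ensures both that $\{g^{s_i}\}$ is a sequence of weak almost K\"ahler-Ricci solitons in the sense of Definition \ref{almost-kr-soliton} and that the volume non-collapsing condition (\ref{further-condition-1}) holds. What remains is to verify the two flow-based conditions (\ref{further-condition-2}) and (\ref{further-condition-3}) for the modified K\"ahler-Ricci flow with initial metric $g^{s_i}$; once these are established, Theorem \ref{main-theorem-1} immediately yields the conclusion.

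To verify (\ref{further-condition-3}), I would start from the potential identity (\ref{ricci-potential-equa}), which shows that at $t=0$ the quantity $R(g^{s_i}) - \Delta\theta_{g^{s_i}} - n$ is of order $(1-s_i)$ times a weighted Laplacian of $\phi_{s_i}$, controllable in $L^1$ using the modified K-energy bound. Lemma \ref{evolution-R} then exhibits $R - \Delta\theta - n$ as a solution of a parabolic equation whose forcing is precisely $|\text{Ric}(g) - g - \sqrt{-1}\partial\bar\partial\theta|^2 \ge 0$; integrating in space-time and exploiting part iii) of Definition \ref{almost-kr-soliton}, one propagates the $L^1$ smallness from $t=0$ along the flow on $[0,1]$. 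For (\ref{further-condition-2}), the uniform $C^0$-bound $|X_i|_{g^{s_i}} \le A$ at the initial time is furnished by Lemma \ref{further-condition-2'}. Combining this with the evolution of $|X|^2_{g_t}$ along the flow, which is a heat-type equation with curvature-driven reaction terms, and with the short-time curvature decay from Lemma \ref{modified-pseudo} and Lemma \ref{tensor}, I would upgrade the initial bound to the required $B/\sqrt{t}$ estimate.

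The main obstacle should be the $L^1$-control (\ref{further-condition-3}): one needs to use the modified K-energy lower bound in an essential way to control the $(1-s_i)$-weighted integrals involving $\phi_{s_i}$ and $e^{\theta_{\omega_{\phi_{s_i}}}}$ on the initial slice, and then propagate that smallness along the full time interval of the flow without losing a factor depending on $s_i$. The $|X|$-bound, by contrast, should be more routine once the correct parabolic comparison with the curvature decay from Section 2 is set up. After both conditions are verified, Theorem \ref{main-theorem-1} produces the required Gromov-Hausdorff limit, a K\"ahler-Ricci soliton with singular set of complex codimension at least two, which is precisely the assertion.
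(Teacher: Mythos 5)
Your high-level plan matches the paper's: verify (\ref{further-condition-1})--(\ref{further-condition-3}) for the continuity-method metrics $g^{s_i}$ and apply Theorem \ref{main-theorem-1}. But the mechanisms you sketch for the two flow conditions diverge from the paper and leave gaps. You attribute the initial-time $|X_i|_{g^{s_i}}\le A$ bound to Lemma \ref{further-condition-2'}, which actually proves (\ref{further-condition-3}); the initial $|X|$ bound is part of Definition \ref{almost-kr-soliton} ii) from [WZ]. The paper does not obtain $|X|_{g^s_t}\le B/\sqrt{t}$ by evolving $|X|^2$ and comparing with curvature decay; instead (Lemma \ref{flow-estimate}) it writes $|X|_{g^s_t}=|\nabla\theta_{g^s_t}|_{g^s_t}$ as a difference of potential gradients along the flow and its pullback by $\Phi_t$, and invokes the gradient bound of Lemma 4.3 in [CTZ], which needs a uniform $C^0$ bound on $h_{g^s}$. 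Likewise, the paper does not propagate $L^1$-smallness of $R-\Delta\theta-n$ by integrating Lemma \ref{evolution-R} against Definition \ref{almost-kr-soliton} iii); rather (Lemma \ref{further-condition-2'}) it derives $(\Delta+X)(h_{g^s}-\theta_{g^s})\ge -(1-s)(c_1+n)$ from (\ref{continuity-equation}), propagates this one-sided bound by the maximum principle, and combines a weighted $L^2$-gradient estimate (Lemma 4.4 in [CTZ]) with the $|X|\le B/\sqrt{t}$ bound. Nonnegativity of $(\Delta+X)(h-\theta)+(1-s)(c_1+n)$ is what makes the $L^1$ integral collapse under integration by parts against $e^{\theta}\omega^n$; this structure is absent from your outline.

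The pivotal missing step is the one you flag as the ``main obstacle'' but do not supply: the paper first proves $h_{g^s}-\theta_{g^s}=-(1-s)\phi_s\to 0$ in $C^0$ as $s\to 1$, which is precisely where the modified K-energy lower bound enters. The mechanism is the identity $-\tfrac{d}{ds}\mu(\phi_s)=(1-s)\tfrac{d}{ds}(I-J)(\phi_s)$, L'H\^opital's rule applied to $(1-s)\int_0^s \mu(\phi_\tau)(1-\tau)^{-2}\,d\tau$, and an oscillation estimate from the Green's function representation, giving $(1-s)\|\phi_s\|_{C^0}\to 0$. Both Lemma \ref{flow-estimate} and Lemma \ref{further-condition-2'} feed off this estimate, so without it the rest of the verification does not close.
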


\begin{lem}
\begin{align}\label{ricci-potential-estimate-1} h_{g_{s}}-\theta_{g_s}\rightarrow 0,~\text{as}~s\to 1.\end{align}
Consequently,
 \begin{align}\label{ricci-potential-estimate-2} |h_{g_{s}}|\leq C.\end{align}
\end{lem}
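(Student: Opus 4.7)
From equation (\ref{ricci-potential-equa}) we have $h_{g_s}-\theta_{g_s}=-(1-s)\phi_s$, so (\ref{ricci-potential-estimate-1}) is equivalent to the uniform convergence $(1-s)\phi_s\to 0$ as $s\to 1$ (after the standard normalization, e.g.\ $\sup_M\phi_s=0$). Once (\ref{ricci-potential-estimate-1}) holds, the bound (\ref{ricci-potential-estimate-2}) follows immediately, because $\theta_{g_s}$ is uniformly bounded: the norm $|X|_{g_s}$ is uniformly bounded along the family $g_s$ (by $K_X$-invariance and the relation $\omega_{g_s}\ge (1-s)\omega_g$ from (\ref{continuity-equation})), and $\theta_{g_s}$ is normalized so that $\int_M e^{\theta_{g_s}}\omega_{g_s}^n=V$.

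To prove $(1-s)\phi_s\to 0$, I would differentiate the modified K-energy $\mu(\phi_s)$ along the path (\ref{continuity-equation}). Using (\ref{ricci-potential-equa}) and the Tian--Zhu formula for the first variation of $\mu$, a direct computation yields
\[
\frac{d}{ds}\mu(\phi_s)=-\frac{(1-s)}{V}\int_M |\nabla_{g_s}\dot\phi_s|^2\, e^{\theta_{g_s}}\omega_{g_s}^n,
\]
and differentiating (\ref{continuity-equation}) in $s$ expresses $\dot\phi_s$ in terms of $\phi_s$ itself. The hypothesis $\mu(\phi_s)\ge -C$ then forces the weighted Dirichlet energy $(1-s)\int_M |\nabla \phi_s|^2 e^{\theta_{g_s}}\omega_{g_s}^n$ to tend to zero as $s\to 1$. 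A twisted Poincar\'e inequality (available thanks to the uniform positive lower bound of the Bakry--\'Emery Ricci curvature of $g_s$ along the path) converts this into $L^2$ decay of $(1-s)(\phi_s-\bar\phi_s)$, where $\bar\phi_s$ denotes an appropriate weighted average.

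The final step is to upgrade the integrated decay to a $C^0$ estimate. For this I would apply the Moser iteration to the Laplace-type equation satisfied by $h_{g_s}-\theta_{g_s}=-(1-s)\phi_s$, using the uniform Sobolev constants coming from Lemma \ref{isoperimetrc} under the Bakry--\'Emery condition. The main obstacle is precisely this upgrade: integrated bounds typically yield only a bounded $C^0$-norm, not decay. To force decay, one would either carefully quantify the Moser iteration with the $L^2$ decay rate as input, or alternatively invoke Ko\l odziej-type pluripotential estimates applied to the twisted Monge--Amp\`ere equation equivalent to (\ref{continuity-equation}), which directly transfers the $L^1$ smallness of $(1-s)\phi_s$ into $L^\infty$ smallness provided one has a uniform $\alpha$-invariant or entropy bound along the continuity path.
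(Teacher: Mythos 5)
Your reduction is correct: by (\ref{ricci-potential-equa}), $h_{g_s}-\theta_{g_s}=-(1-s)\phi_s$, so the lemma amounts to $(1-s)\|\phi_s\|_{C^0}\to 0$; and $\theta_{g_s}$ being uniformly bounded (this is [Zh], not the $K_X$-invariance argument you sketch) gives (\ref{ricci-potential-estimate-2}) once (\ref{ricci-potential-estimate-1}) is known. However, your route to $(1-s)\|\phi_s\|_{C^0}\to 0$ has a genuine gap that you yourself flag: passing from decay of a weighted Dirichlet energy of $\phi_s$ (or of $\dot\phi_s$) to $C^0$ decay of $(1-s)\phi_s$. Neither of the two remedies you propose is carried out, and the first one (Moser iteration) does not obviously work here because a Moser scheme for the equation satisfied by $(1-s)\phi_s$ would in general only give a uniform $C^0$ bound, not a bound that vanishes with the Dirichlet energy, unless one tracks the iteration constants much more carefully. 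Also, the first variation formula you use is not quite the one available: from [TZ1] one has $-\frac{d}{ds}\mu(\phi_s)=(1-s)\frac{d}{ds}(I-J)(\phi_s)$, and it is not immediate that this translates into a clean Dirichlet energy of $\dot\phi_s$ that in turn controls $\phi_s$.

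The paper's proof takes a more elementary route that sidesteps the $L^2\to L^\infty$ upgrade entirely. It integrates the TZ identity $-\mu'(s)=(1-s)\frac{d}{ds}(I-J)(\phi_s)$ to obtain
$(I-J)(\phi_s)=-\frac{\mu(\phi_s)}{1-s}+\int_0^s\frac{\mu(\phi_\tau)}{(1-\tau)^2}\,d\tau$,
then uses monotonicity and the lower bound of $\mu$ together with l'H\^{o}pital's rule to conclude $(1-s)(I-J)(\phi_s)\to 0$. The decisive ingredient you are missing is the Green-function $C^0$ estimate of Mabuchi [Ma] (also in [CTZ]): $\text{osc}(\phi_s)\le\|\phi_s\|_{C^0}\le I(\phi_s)+C$, which (together with $I\le (n+1)(I-J)$) directly converts the decay of $(1-s)(I-J)(\phi_s)$ into the desired $C^0$ decay of $(1-s)\phi_s$ without any Moser iteration or Ko\l{}odziej-type estimate. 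In short, your overall skeleton is sound, but the crucial step is replaced in the paper by a classical a priori inequality bounding the $C^0$ norm by the functional $I$, and you should use that rather than trying to upgrade an integral estimate.
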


\begin{proof}
Recall  the  two functionals $I$ and $J$ defined for  $K_X$-invariant K\"{a}hler potentials by ([Zh], [TZ1]),
 \begin{align}
 I(\phi)=\int_M\phi(e^{\theta_{\omega_0}}\omega_0^n-e^{\theta_{\omega_\phi}}\omega_\phi^n)\notag
 \end{align}
 and
 \begin{align}J(\phi)=\int_0^1\int_M\dot{\phi_t}(e^{\theta_{\omega_0}}\omega_0^n-e^{\theta_{\omega_\phi}}\omega_\phi^n)dt.\notag
 \end{align}
It was showed  for the potential $\phi_s$  in  [TZ1] that
$$-\frac{d}{ds}\mu(\phi_s)=(1-s)\frac{d}{ds}(I-J)(\phi_s).$$
 Then
 $$(I-J)(\phi_s)=-\frac{\mu(\phi_s)}{1-s}+\int_0^s\frac{\mu(\phi_\tau)}{(1-s)^2}ds .$$
 Since $\mu(\phi_s)$ is monotone and bounded below,  $\lim_{s\rightarrow 1^-}\mu(\phi_s)$ exists.  By 'H$\hat o$pital's rule,  it is easy to see that
  $$\lim_{s\rightarrow 1^-} (1-s)\int_0^s\frac{\mu(\phi_\tau)}{(1-\tau)^2}d\tau=\lim_{s\rightarrow 1^-}\mu(\phi_s).$$
  Thus we get
  $$\lim_{s\rightarrow 1^-}(1-s)(I-J)(\phi_s)=0.$$
  On the other hand, by using the Green formula [Ma] (also see [CTZ]), there exists a uniform constant $C$ such that
  $$\text{osc}(\phi_s)\le \|\phi_s\|_{C^0}\le I(\phi_s)+C .$$
  It follows  that
  $$(1-s)\|\phi_s\|_{C^0}\le (1-s) (c(I-J)(\phi_s)+C)\to 0.~\text{as}~s\to 1.$$
 Hence  by  (\ref{ricci-potential-equa}),  we obtain   (\ref{ricci-potential-estimate-1}).
  (\ref{ricci-potential-estimate-2}) is a direct  consequence  of   (\ref{ricci-potential-estimate-1})  since $\theta_{g_s}$ are uniformly bounded [Zh].
 \end{proof}

\begin{lem}\label{flow-estimate}
Let  $g^s_t=g^s(\cdot, t)$ be a solution of  the  K\"ahler-Ricci flow (\ref{kr-flow}) with the above $g^s$ as an initial metric.
Then
\begin{align}\label{further-condition-1'}
 |X|_{g^s_t}\leq \frac{B}{\sqrt{t}}.\end{align}
 \end{lem}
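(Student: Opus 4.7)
The starting observation is that on a K\"ahler manifold, if $\theta_g$ denotes the real holomorphy potential of $X$ (normalized for definiteness by $\int_Me^{\theta_g}\omega_g^n=V$), then $|X|_g^2=\tfrac12|\nabla_g\theta_g|^2$. Consequently the lemma reduces to a gradient bound $|\nabla_{g_t^s}\theta_{g_t^s}|^2\le C/t$ along the flow (\ref{kr-flow}), uniformly in $s$.

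The plan has two stages. Stage one is to establish a uniform $C^0$ bound $\|\theta_{g_t^s}\|_{C^0}\le C_0$ for all $s<1$ and all $t\in[0,1]$. At $t=0$ this is the classical uniform bound for continuity-method solutions, which goes back to [Zh] and is already used in the previous lemma. To propagate it along the flow, I would differentiate the defining identity $\sqrt{-1}\partial\bar\partial\theta_{g_t}=L_X\omega_{g_t}$ using (\ref{kr-flow}) together with the holomorphicity of $X$ (so that in particular $[X,\bar X]=0$), obtaining a parabolic equation of the schematic form
\[
\partial_t\theta_{g_t}=\Delta_{g_t}\theta_{g_t}+\theta_{g_t}+X(\theta_{g_t})+c(t),
\]
where $c(t)$ is a time-dependent normalization constant; the maximum principle then transports the initial $C^0$ bound forward in time.

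Stage two is a Bernstein-type parabolic gradient estimate. I would compute the heat-type evolution of $w=|\nabla_{g_t}\theta_{g_t}|^2$ via the Bochner formula, absorbing the resulting Ricci-curvature terms through Perelman-type estimates for the modified K\"ahler-Ricci flow together with the pseudolocality bound of Lemma~\ref{modified-pseudo}, and then apply the maximum principle to the space-time quantity $t\,w-A\,\theta_{g_t}^2$ with $A$ sufficiently large, in the spirit of the Li-Yau / Hamilton smoothing argument. This would yield
\[
|\nabla_{g_t}\theta_{g_t}|^2\le\frac{C_1}{t}\,\|\theta_{g_t}\|_{C^0}^2,
\]
which combined with stage one gives $|X|_{g_t^s}\le B/\sqrt{t}$ with $B$ independent of $s$, as desired.

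The delicate step is stage one, the uniform $C^0$ bound on $\theta_{g_t^s}$. A naive application of the maximum principle to the above equation produces an exponentially growing factor; to eliminate it one can either use the soliton normalization carefully, or equivalently pull back $g_t$ by the one-parameter subgroup generated by $\mathrm{Re}(X)$, which converts (\ref{kr-flow}) into the standard normalized K\"ahler-Ricci flow while preserving both $|X|_{g_t}$ and $\theta_{g_t}$ up to the pullback. Under this reduction one can invoke Perelman's uniform bound on the Ricci potential $h_{g_t}$ along the flow and combine it with the initial control $\|h_{g^s}-\theta_{g^s}\|_{C^0}\to 0$ coming from (\ref{ricci-potential-estimate-1}) to propagate the $C^0$ bound uniformly in $s$.
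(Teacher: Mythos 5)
The paper's proof is quite different from yours and, I think, cleaner. Writing $u_t$ for the K\"ahler potential of $g^s_t$ along the modified flow (\ref{kr-flow}) and $\tilde u_t$ for the potential of the pulled-back metric $\tilde g^s_t=\Phi_t^*g^s_t$ (a solution of the unmodified normalized flow, $\Phi_t$ being the group generated by $\mathrm{Re}\,X$), the key identity is
\[
\frac{\partial}{\partial t}u_t=\Phi_t^*\Bigl(\frac{\partial}{\partial t}\tilde u_t\Bigr)+\theta_{g^s_t}+m(t),
\]
so that $\nabla\theta_{g^s_t}$ is expressed as a difference of two terms, each of which is the gradient of the time-derivative of a flow potential. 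One then cites the known parabolic smoothing estimates: Lemma~4.3 of [CTZ] gives $|\nabla\dot u_t|\le e^2\|h_{g^s}-\theta_{g^s}\|_{C^0}/\sqrt t$, and [T1] gives $|\nabla\dot{\tilde u}_t|\le e^2\|h_{g^s}\|_{C^0}/\sqrt t$; combining with the uniform $C^0$ bounds from (\ref{ricci-potential-estimate-1}) and (\ref{ricci-potential-estimate-2}) yields (\ref{further-condition-1'}). Your proposal bypasses this decomposition and instead attempts a direct Bernstein-type estimate on $|\nabla\theta_{g_t}|^2$.

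There are two concrete problems with your route. First, to absorb the Ricci-curvature terms in the Bochner computation you invoke pseudolocality (Lemma~\ref{modified-pseudo}); but pseudolocality is a purely local statement, valid only on balls where the initial metric is almost Euclidean (volume ratio $\ge(1-\delta)c_n$), and the continuity-method metrics $g^s$ satisfy no such hypothesis uniformly over the manifold, so you cannot turn it into a global gradient bound. In fact the invocation is also unnecessary: if one differentiates $|\nabla\theta_{g_t}|^2_{g_t}=g_t^{i\bar j}\theta_i\theta_{\bar j}$ in $t$, the term $(\partial_tg^{i\bar j})\theta_i\theta_{\bar j}$ produces $+R^{i\bar j}\theta_i\theta_{\bar j}$, which exactly cancels the Ricci term coming out of the Bochner identity for $\Delta|\nabla\theta|^2$; this cancellation is precisely why the CTZ/Tian estimates need no curvature assumption, and without noticing it your argument has a hole where the pseudolocality citation now sits. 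Second, the $C^0$ bound of your stage one is over-engineered: Zhu's a priori bound for holomorphy potentials of $K_X$-invariant K\"ahler metrics in a fixed class already gives $\|\theta_{g_t^s}\|_{C^0}\le C$ for all $t$ and $s$ with no propagation argument, and your appeal to ``Perelman's uniform bound on the Ricci potential'' is a misattribution here --- in this paper the uniform control of $h_{g^s}$ comes from the boundedness of the modified K-energy via the $I$ and $J$ functionals, not from Perelman's long-time flow estimates.
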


\begin{proof}
Let $u_t$ be  the K\"ahler potential of $g^s_t$.  Namely,  it  is defined  by
$$\omega_{g^s_t}=\omega_{g^s}+\sqrt{-1}\partial\bar{\partial}u.$$
According to Lemma 4.3 in [CTZ],  we have
$$|\nabla(\frac{\partial}{\partial t}u)|_{g^s_t }\leq e^2\frac{\|h_{g^s}-\theta_{g^s}\|_{C^0}}{\sqrt{t}},0< t\leq 1.$$
Since $\tilde {g^s_t}=\Phi_t^*(g^s_t)$  is a solution of  the K\"ahler-Ricci flow,
\begin{align}\label{flow}
\frac{\partial}{\partial t} g=-\text{Ric}(g)+g\notag,
\end{align}
where $\Phi(-t)$  is   an  one parameter subgroup generated by $\text{real}(X)$,  we also  have for the  K\"ahler potential $\tilde u$ of $\tilde {g^s_t}$ ([T1]),
$$|\nabla(\frac{\partial}{\partial t}\tilde{u})|_{\tilde {g^s_t}}\leq e^2\frac{\|h_{g^s}\|_{C^0}}{\sqrt{t}},~\forall~0< t\leq 1.
$$
Note that
\begin{align}
\frac{\partial}{\partial t}u=\Phi_t^*(\frac{\partial}{\partial t}\tilde{u})+\theta_{g^s_t}+m(t).\notag
\end{align}
We get
\begin{align}
|X|_{g^s_t}=|\nabla \theta_{g^s_t}|_{g^s_t}\leq |\nabla(\frac{\partial}{\partial t}\tilde{u})|_{\tilde {g^s_t}}+|\nabla(\frac{\partial}{\partial t}u)|_{g^s_t}.\notag
\end{align}
Now (\ref{further-condition-1'}) follows from (\ref{ricci-potential-estimate-2}) immediately.
\end{proof}

\begin{lem}\label{further-condition-2'}
Let  $g^s_t=g^s(\cdot, t)$ be a solution of  the K\"ahler-Ricci flow as in Lemma \ref{flow-estimate}.  Then
\begin{align}
  \int_0^1 dt \int_{M} |R(g^s_t)-\Delta\theta_{g^s_t}-n|  \omega_{g^s_t}^n\rightarrow 0, ~\text{as}~s\to 1.
  \end{align}
  \end{lem}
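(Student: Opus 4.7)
The plan is to reduce the claim to a scalar estimate for the difference of potentials $f^s_t := h_{g^s_t} - \theta_{g^s_t}$. Since $\sqrt{-1}\partial\bar\partial f^s_t = \text{Ric}(\omega_{g^s_t}) - \omega_{g^s_t} - L_X\omega_{g^s_t}$ by the defining equations of $h$ and $\theta$, tracing gives $\Delta f^s_t = R(g^s_t) - n - \Delta\theta_{g^s_t}$, so it suffices to show $\int_0^1 dt \int_M |\Delta f^s_t|\,\omega_{g^s_t}^n \to 0$ as $s \to 1$.

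First I would verify that $f^s_0$ is small in $C^0$ and in $W^{1,2}$. Equation (\ref{ricci-potential-equa}) gives $f^s_0 = -(1-s)\phi_s$ after fixing additive constants, and the preceding lemma $(1-s)\|\phi_s\|_{C^0}\to 0$ yields $\|f^s_0\|_{C^0}\to 0$. Since $\omega_g + \sqrt{-1}\partial\bar\partial\phi_s > 0$ forces $n + \Delta\phi_s > 0$ pointwise, the negative part of $\Delta\phi_s$ is bounded by $n$, so $\int_M|\Delta\phi_s|\,\omega_g^n = 2\int_M(\Delta\phi_s)_-\omega_g^n \le 2nV$; integration by parts then gives $\int_M|\nabla f^s_0|^2\omega_g^n = -\int_M f^s_0 \Delta f^s_0\,\omega_g^n \le \|f^s_0\|_{C^0}(1-s)\int_M|\Delta\phi_s|\omega_g^n \to 0$. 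Direct differentiation on (\ref{kr-flow}) yields the linear scalar evolution
$$\frac{\partial f^s_t}{\partial t} = \Delta f^s_t + X(f^s_t) + f^s_t$$
after absorbing additive constants, and the maximum principle gives $\|f^s_t\|_{C^0}\le e^t\|f^s_0\|_{C^0}$, so $f^s_t \to 0$ in $C^0$ uniformly on $[0,1]$ as $s\to 1$.

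The core step is a spacetime $L^2$ bound for $\nabla^2 f^s_t$. I would work with the weighted measure $e^{\theta_t}\omega_t^n$, whose total mass is constant along (\ref{kr-flow}) because $\partial_t(e^{\theta_t}\omega_t^n) = -(\Delta + X)f^s_t\cdot e^{\theta_t}\omega_t^n$ and the weighted Laplacian $\Delta + X$ has vanishing integral against $e^{\theta_t}\omega_t^n$. Setting $H(t) = \int_M|\nabla f^s_t|^2 e^{\theta_t}\omega_t^n$, a routine computation combining the evolution of $f^s_t$ with the Bochner identity for the Bakry-\'Emery Laplacian $\Delta + X$ and the cancellation
$$\text{Ric}(g^s_t) + \sqrt{-1}\partial\bar\partial\theta_t = g^s_t + \sqrt{-1}\partial\bar\partial f^s_t$$
(which eliminates the unbounded curvature term) yields
$$H'(t)\le -2\int_M |\nabla^2 f^s_t|^2 e^{\theta_t}\omega_t^n + CH(t).$$
By Gr\"onwall, $H(t)\le e^{Ct}H(0)$ and integrating the inequality gives $\int_0^1 dt \int_M|\nabla^2 f^s_t|^2 e^{\theta_t}\omega_t^n \le C' H(0)\to 0$.

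Finally, Cauchy-Schwarz and $|\Delta f^s_t|^2 \le n|\nabla^2 f^s_t|^2$, combined with the uniform boundedness of $\theta_t$ along the flow (so that one can pass between $\omega_t^n$ and $e^{\theta_t}\omega_t^n$ at the cost of a constant), give
$$\int_0^1 dt \int_M|\Delta f^s_t|\,\omega_t^n \le V^{1/2}\!\int_0^1\!\Bigl(\int_M (\Delta f^s_t)^2 \omega_t^n\Bigr)^{1/2}\!dt \le C''\!\Bigl(\int_0^1 dt \int_M |\nabla^2 f^s_t|^2 e^{\theta_t}\omega_t^n\Bigr)^{1/2}\!\to 0.$$
The main obstacle is the weighted Bochner-type monotonicity, since the curvature of $g^s_t$ is not a priori bounded; the crucial cancellation is that along the modified flow the Bakry-\'Emery form $\text{Ric}(g^s_t) + \sqrt{-1}\partial\bar\partial\theta_t$ differs from $g^s_t$ only by $\sqrt{-1}\partial\bar\partial f^s_t$, whose $C^0$ size is already controlled by the maximum principle argument above.
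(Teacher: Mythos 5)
Your approach is genuinely different from the paper's, and the key step is not justified. The paper never attempts a spacetime $L^2$ bound on $\nabla^2 f^s_t$. Instead it starts from the one--sided inequality $(\Delta+X)(h_{g^s}-\theta_{g^s})\ge -(1-s)(c_1+n)$ coming directly from (\ref{continuity-equation}), propagates it along the flow by the maximum principle to get $(\Delta+X)(h_{g^s_t}-\theta_{g^s_t})\ge -(1-s)(c_1+n)e^t$, and then exploits the fact that $\int_M(\Delta+X)f\,e^{\theta_t}\omega_t^n=0$: a one--sided bound with vanishing weighted mean immediately yields an $L^1$ bound for $(\Delta+X)f$ of size $(1-s)$, and the remaining term $\int_0^1\int_M|Xf|\,e^{\theta_t}\omega_t^n\,dt$ is handled by Cauchy--Schwarz combined with the weighted gradient bound and $|X|_{g^s_t}\le B/\sqrt t$. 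No Bochner identity, no Hessian control, and no Sobolev input is needed.

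The gap in your argument is the asserted monotonicity $H'(t)\le -2\int_M|\nabla^2 f^s_t|^2 e^{\theta_t}\omega_t^n + CH(t)$, which you call a ``routine computation'' but do not carry out. When one actually differentiates $H(t)=\int_M g^{i\bar j}f_if_{\bar j}\,e^{\theta_t}\omega_t^n$, three sources contribute besides the benign ones: $\partial_t g^{i\bar j}=g^{i\bar k}g^{l\bar j}f_{l\bar k}$ contributes $\int_M f_{l\bar k}f^lf^{\bar k}e^{\theta_t}\omega_t^n$; the weighted measure contributes $-\int_M|\nabla f|^2(\Delta+X)f\,e^{\theta_t}\omega_t^n$ (which after integration by parts is again $\int_M\text{hess}f(\nabla f,\nabla f)e^{\theta_t}\omega_t^n$ up to a constant); and replacing $\int_M((\Delta+X)f)^2$ by $\int_M|\nabla^2 f|^2$ through the Bakry--\'Emery Bochner identity introduces the term $\int_M\text{Ric}_L(\nabla f,\nabla f)e^{\theta_t}\omega_t^n$, which by your own cancellation $\text{Ric}+\sqrt{-1}\partial\bar\partial\theta_t=g+\sqrt{-1}\partial\bar\partial f$ equals $H(t)+\int_M\text{hess}f(\nabla f,\nabla f)e^{\theta_t}\omega_t^n$. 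So the cancellation you invoke does eliminate the unbounded curvature, but it leaves behind a net cubic term of the form $\int_M\text{hess}f(\nabla f,\nabla f)e^{\theta_t}\omega_t^n$. This cubic term cannot be absorbed into $-2\int_M|\nabla^2 f|^2+CH$ by Cauchy--Schwarz, since that would require a control of $\int_M|\nabla f|^4$, for which you have neither a uniform pointwise gradient bound (the Shi--type estimate $|\nabla f|\lesssim\|f_0\|_{C^0}/\sqrt t$ blows up as $t\to0$ in a nonintegrable way when squared twice) nor a Sobolev inequality with a constant that is uniform as $s\to1$. Thus the central inequality is not established, and with it the Gr\"onwall argument and the final Cauchy--Schwarz step collapse. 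There is also a secondary issue: the smallness of $H(0)$ requires the integration by parts to be carried out with respect to $g^s$ and the weighted measure $e^{\theta_0}\omega_{g^s}^n$, whereas you perform it with respect to the fixed background metric $g$; this is repairable (using $\Delta_{g^s}\phi_s<n$ from the continuity equation, exactly in the spirit of the paper's one--sided bound), but as written the two quantities are not comparable when $g^s$ degenerates.
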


\begin{proof} First by (\ref{continuity-equation}), we note that
 \begin{align}
  (\Delta+X)(h_{g^s}-\theta_{g^s})\ge -(1-s)n - (1-s)|X(\phi_s)|\ge -(1-s)(c_1+n),\notag\end{align}
 where $c_1=\sup\{\|X(\phi)\|_{C^0(M)}|~K_X-\text{invariant K\"ahler potential}~\phi \}$ is a bounded  number [Zh].
  By the Maximum Principle,  it follows  that (cf. Lemma 4.2 in [CTZ]),
  \begin{align}
   (\Delta+X)(h_{g^s_t}-\theta_{g^s_t}) \ge -(1-s)(c_1+n)  e^t, ~\forall~0< t.
   \notag\end{align}
The above implies  that (cf. Lemma 4.4 in [CTZ]),
\begin{align}
&\int_M|\nabla (h_{g^s_t}-\theta_{g^s_t})|^2 e^{\theta_{g^s_t}}   \omega_{g^s_t}^n  \notag\\
&\le 2e^2(c_1+n) (1-s)\|h_{g^s}-\theta_{g^s}\|_{C^0(M)}, ~\forall~0< t\leq 1.\notag
\end{align}
Hence by (\ref{further-condition-1'}),
we get
\begin{align}
\int_0^1dt\int_M|X(h_{g^s_t}-\theta_{g^s_t})|e^{\theta_{g^s_t}} \omega_{g^s_t}^n
&\leq \int_M|\nabla( h_{g^s_t}-\theta_{g^s_t} )| e^{\theta_{g^s_t}}\omega_{g^s_t}^n \int_0^1|X|_{g^s_t}dt\notag\\
&\leq  C(1-s)^{\frac{1}{2}} \int_0^1\frac{1}{\sqrt{t}} dt \rightarrow 0, ~\text{as}~s\to 1.\notag\end{align}
Therefore,
\begin{align}
&  \int_0^1 dt \int_{M} |R(g^s_t)-\Delta\theta_{g^s_t}-n|   e^{\theta_{g^s_t}} \omega_{g^s_t}^n\notag\\
&\leq \int_0^1dt\int_M| \Delta(h_{g^s_t}-\theta_{g^s_t})+X(h_{g^s_t}-\theta_{g^s_t})
+ (1-s)(c_1+n)|e^{\theta_{g^s_t}} \omega_{g^s_t}^n
\notag\\
&+ \int_0^1dt\int_M |X(h_{g^s_t}-\theta_{g^s_t})|
e^{\theta_{g^s_t}} \omega_{g^s_t}^n \notag +V  (1-s)(c_1+n) \\
 &=\int_0^1dt\int_M|X(h_{g^s_t}-\theta_{g^s_t})|e^{\theta_{g^s_t}} \omega_{g^s_t}^n  + 2V  (1-s)(c_1+n) \notag\\
\nonumber &\rightarrow 0,~\text{as}~s\to 1.\notag
\end{align}
This finishes the proof of Lemma  \ref{further-condition-2'}.
\end{proof}

\end{document}